\newtheorem{theorem}{Theorem}
\theoremstyle{plain}
\newtheorem{definition}{Definition}
\newtheorem{lemma}{Lemma}
\newtheorem{remark}{Remark}
\numberwithin{equation}{section}
\begin{document}
\title[Convergence Analysis for Various Iterative Schemes]{Comparison of The
Speed of Convergence Among Various Iterative Schemes}
\author{Vatan KARAKAYA}
\address{Department of Mathematical Engineering, Yildiz Technical
University, Davutpasa Campus, Esenler, 34210 \.{I}stanbul, Turkey}
\email{vkkaya@yildiz.edu.tr; vkkaya@yahoo.com}
\urladdr{http://www.yarbis.yildiz.edu.tr/vkkaya}
\author{Faik G\"{U}RSOY}
\address{Department of Mathematics, Yildiz Technical University, Davutpasa
Campus, Esenler, 34220 Istanbul, Turkey}
\email{faikgursoy02@hotmail.com; fgursoy@yildiz.edu.tr}
\urladdr{http://www.yarbis.yildiz.edu.tr/fgursoy}
\author{M\"{u}zeyyen ERT\"{U}RK}
\address{Department of Mathematics, Yildiz Technical University, Davutpasa
Campus, Esenler, 34220 Istanbul, Turkey}
\email{merturk3263@gmail.com}
\urladdr{http://www.yarbis.yildiz.edu.tr/merturk}
\subjclass[2000]{Primary 47H06, 54H25.}
\keywords{Iterative schemes, Convergence, Speed of convergence, Equivalence
of convergence, Data dependence of fixed points.}

\begin{abstract}
We show that iterative scheme due to Karahan and \"{O}zdemir (2013) can be
used to approximate fixed point of contraction mappings. Furthermore, we
prove that CR iterative scheme converges faster than the iterative scheme
due to Karahan and \"{O}zdemir (2013) for the class contraction mappings.
Finally, we prove a data dependence result for contraction mappings by
employing iterative scheme due to Karahan and \"{O}zdemir (2013).
\end{abstract}

\maketitle

\section{introduction}

Fixed point theory has been appeared as one of the most powerful and
substantial theoretical tools of mathematics. This theory has a long history
and has been studied intensively by many researchers in various aspects. For
the past 30 years or so, the study of iterative procedures for the
approximation of fixed points of various classes of operators have been
flourishing areas of research for many mathematicians. Consequently,
considerable research efforts have been devoted to introduce various
iteration methods and study its more qualitative features, for example, \cite%
{S, KN, Ishikawa, Kirk, Krasnosel'skii, Mann, Noor, KM-I, Olatinwo, SP,
Picard, Multistep, GenKrasnosel'skii, iam}.

We begin our exposition with an overview of various iterative methods.

Throughout this paper $%
\mathbb{N}
$ denotes set of all nonnegative integers including zero. Let $B$ be a
Banach space, $S$ be a subset of $B$ and $T$ be a selfmap of $S$. Let $%
\left\{ \alpha _{n}^{i}\right\} _{n=0}^{\infty }$, $i\in \left\{
1,2,3\right\} $ be real sequences in $\left[ 0,1\right] $ satisfying certain
control condition(s).

An iterative sequence $\left\{ x_{n}\right\} _{n=0}^{\infty }$ defined by%
\begin{equation}
\left\{ 
\begin{array}{c}
x_{0}\in S\text{, \ \ \ \ \ \ \ \ \ \ \ \ \ \ \ } \\ 
x_{n+1}=Tx_{n}\text{, }n\in 
\mathbb{N}
\text{,}%
\end{array}%
\right.  \label{eqn1}
\end{equation}%
is known as Picard iteration procedure \cite{Picard}, which is commonly used
to approximate fixed point of contraction mappings satisfying%
\begin{equation}
\left\Vert Tx-Ty\right\Vert \leq \delta \left\Vert x-y\right\Vert \text{, }%
\delta \in \left( 0,1\right) \text{, for all }x,y\in B\text{.}  \label{eqn2}
\end{equation}%
The following iteration methods are called Noor \cite{Noor}, and SP \cite{SP}
iteration methods, respectively:%
\begin{equation}
\left\{ 
\begin{array}{c}
x_{0}\in S\text{, \ \ \ \ \ \ \ \ \ \ \ \ \ \ \ \ \ \ \ \ \ \ \ \ \ \ \ \ \
\ \ \ \ \ \ \ \ \ \ \ } \\ 
x_{n+1}=\left( 1-\alpha _{n}^{1}\right) x_{n}+\alpha _{n}^{1}Ty_{n}\text{, \
\ \ \ \ \ \ \ \ \ } \\ 
y_{n}=\left( 1-\alpha _{n}^{2}\right) x_{n}+\alpha _{n}^{2}Tz_{n}\text{, \ \
\ \ \ \ } \\ 
z_{n}=\left( 1-\alpha _{n}^{3}\right) x_{n}+\alpha _{n}^{3}Tx_{n}\text{, }%
n\in 
\mathbb{N}
\text{,}%
\end{array}%
\right.  \label{eqn3}
\end{equation}%
and%
\begin{equation}
\left\{ 
\begin{array}{c}
x_{0}\in S\text{, \ \ \ \ \ \ \ \ \ \ \ \ \ \ \ \ \ \ \ \ \ \ \ \ \ \ \ \ \
\ \ \ \ \ \ \ \ \ \ \ } \\ 
x_{n+1}=\left( 1-\alpha _{n}^{1}\right) y_{n}+\alpha _{n}^{1}Ty_{n}\text{, \
\ \ \ \ \ \ \ \ \ } \\ 
y_{n}=\left( 1-\alpha _{n}^{2}\right) z_{n}+\alpha _{n}^{2}Tz_{n}\text{, \ \
\ \ \ \ } \\ 
z_{n}=\left( 1-\alpha _{n}^{3}\right) x_{n}+\alpha _{n}^{3}Tx_{n}\text{, }%
n\in 
\mathbb{N}
\text{.}%
\end{array}%
\right.  \label{eqn4}
\end{equation}

\begin{remark}
(i) Noor iteration method (1.5) reduces to well-known Ishikawa iteration
method \cite{Ishikawa} for $\alpha _{n}^{3}=0$ and Mann iteration method 
\cite{Mann} for $\alpha _{n}^{2}=\alpha _{n}^{3}=0$. (ii) SP iteration
method (1.6) reduces to a two-step Mann iteration method \cite{iam} for $%
\alpha _{n}^{3}=0$.
\end{remark}

Agarwal et al. \cite{S} inroduced an S-iteration method as follows%
\begin{equation}
\left\{ 
\begin{array}{c}
s_{0}\in S\text{, \ \ \ \ \ \ \ \ \ \ \ \ \ \ \ \ \ \ \ \ \ \ \ \ \ \ \ \ \
\ \ \ \ \ \ \ \ \ \ \ } \\ 
s_{n+1}=\left( 1-\alpha _{n}^{1}\right) Ts_{n}+\alpha _{n}^{1}Tt_{n}\text{,
\ \ \ \ \ \ \ \ \ \ } \\ 
t_{n}=\left( 1-\alpha _{n}^{2}\right) s_{n}+\alpha _{n}^{2}Ts_{n}\text{, }%
n\in 
\mathbb{N}
\text{.}%
\end{array}%
\right.  \label{eqn5}
\end{equation}%
The following iteration method is referred to as CR iteration method \cite%
{CR} 
\begin{equation}
\left\{ 
\begin{array}{c}
u_{0}\in S\text{, \ \ \ \ \ \ \ \ \ \ \ \ \ \ \ \ \ \ \ \ \ \ \ \ \ \ \ \ \
\ \ \ \ \ \ \ \ \ \ \ } \\ 
u_{n+1}=\left( 1-\alpha _{n}^{1}\right) v_{n}+\alpha _{n}^{1}Tv_{n}\text{, \
\ \ \ \ \ \ \ \ \ } \\ 
v_{n}=\left( 1-\alpha _{n}^{2}\right) Tu_{n}+\alpha _{n}^{2}Ty_{n}\text{, \
\ \ \ \ \ } \\ 
y_{n}=\left( 1-\alpha _{n}^{3}\right) u_{n}+\alpha _{n}^{3}Tu_{n}\text{, }%
n\in 
\mathbb{N}
\text{.}%
\end{array}%
\right.  \label{eqn6}
\end{equation}%
Very recently, Karahan and \"{O}zdemir \cite{Karahan} introduced a new three
step iteration as follows%
\begin{equation}
\left\{ 
\begin{array}{c}
p_{0}\in S\text{, \ \ \ \ \ \ \ \ \ \ \ \ \ \ \ \ \ \ \ \ \ \ \ \ \ \ \ \ \
\ \ \ \ \ \ \ \ \ \ \ } \\ 
p_{n+1}=\left( 1-\alpha _{n}^{1}\right) Tp_{n}+\alpha _{n}^{1}Tq_{n}\text{,
\ \ \ \ \ \ \ \ \ \ } \\ 
q_{n}=\left( 1-\alpha _{n}^{2}\right) Tp_{n}+\alpha _{n}^{2}Tr_{n}\text{, \
\ \ \ \ \ } \\ 
r_{n}=\left( 1-\alpha _{n}^{3}\right) p_{n}+\alpha _{n}^{3}Tp_{n}\text{, }%
n\in 
\mathbb{N}
\text{.}%
\end{array}%
\right.  \label{eqn7}
\end{equation}%
Convergence analysis of iterative methods has an important role in the study
of iterative approximation of fixed point theory. Fixed point iteration
methods may exhibit radically different behaviors for various classes of
mappings. While a particular fixed point iteration method is convergent for
an appropriate class of mappings, it may not be convergent for the others.
Due to various reasons, it is important to determine whether an iteration
method converges to fixed point of a mapping. In many cases, there can be
two or more than two iteration procedures approximating to a fixed point of
a mapping, for example, \cite{SS, Karakaya, Rhoades, Zxue}. In such cases,
the critical and important point is to compare rate of convergence of these
iterations to find out which ones converge faster to that fixed point, e.g., 
\cite{Babu, Berinde, Berinde1, Berinde2, Hussain1, Hussain, Suantai,
Popescu, Rhoades1, Sahu, ZXUE, Yuan}.

Recently, several authors introduced different type iteration methods and
they have proved that their iteration methods converges faster than Picard,
Mann and Ishikawa iteration methods, e.g., \cite{CR, Karahan, Karakaya,
Suantai, Sahu}.

In this paper, we are concerned with two recent iteration methods defined by
(1.6) and (1.7). We show that iteration method (1.7) converges to fixed
point of a contraction mapping satisfying (1.2). Also, we prove that CR
iteration method (1.6) is equivalent and faster than iteration method (1.7)
for the class of contraction mappings. Finally, we give a data dependence
result for the fixed point of contraction mappings using iteration method
(1.7).

In order to obtain our main results we need following lemmas and definitions.

\begin{definition}
\cite{Vasile} Let $\left\{ a_{n}\right\} _{n=0}^{\infty }$ and $\left\{
b_{n}\right\} _{n=0}^{\infty }$ be two sequences of real numbers with limits 
$a$ and $b$, respectively. Assume that there exists%
\begin{equation}
\underset{n\rightarrow \infty }{\lim }\frac{\left\vert a_{n}-a\right\vert }{%
\left\vert b_{n}-b\right\vert }=l\text{.}  \label{eqn8}
\end{equation}

(i) If $l=0$, the we say that $\left\{ a_{n}\right\} _{n=0}^{\infty }$
converges faster to $a$ than $\left\{ b_{n}\right\} _{n=0}^{\infty }$ to $b$.

(ii) If $0<l<\infty $, then we say that $\left\{ a_{n}\right\}
_{n=0}^{\infty }$ and $\left\{ b_{n}\right\} _{n=0}^{\infty }$ have the same
rate of convergence.
\end{definition}

\begin{definition}
\cite{Vasile} Let $T$,$\widetilde{T}:B\rightarrow B$ be two operators. We
say that $\widetilde{T}$ is an approximate operator of $T$ if for all $x\in
B $ and for a fixed $\varepsilon >0$ we have 
\begin{equation}
\left\Vert Tx-\widetilde{T}x\right\Vert \leq \varepsilon .  \label{eqn9}
\end{equation}
\end{definition}

\begin{lemma}
\cite{Weng}Let $\left\{ a_{n}\right\} _{n=0}^{\infty }$ and $\left\{ \rho
_{n}\right\} _{n=0}^{\infty }$ be nonnegative real sequences satisfying the
following inequality:%
\begin{equation}
a_{n+1}\leq \left( 1-\eta _{n}\right) a_{n}+\rho _{n}\text{,}  \label{eqn10}
\end{equation}%
where $\eta _{n}\in \left( 0,1\right) $, for all $n\geq n_{0}$, $%
\dsum\nolimits_{n=1}^{\infty }\eta _{n}=\infty $, and $\frac{\rho _{n}}{\eta
_{n}}\rightarrow 0$ as $n\rightarrow \infty $. Then $\lim_{n\rightarrow
\infty }a_{n}=0$.
\end{lemma}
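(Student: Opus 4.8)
The plan is to prove the statement by an $\varepsilon$-argument that first absorbs the perturbation $\rho_n$ into the contraction factor and then exploits the divergence of $\sum \eta_n$. Fix an arbitrary $\varepsilon > 0$. Since $\rho_n/\eta_n \to 0$, there exists $N \ge n_0$ such that $\rho_n < \varepsilon\,\eta_n$ for every $n \ge N$. Substituting this bound into the hypothesis $a_{n+1} \le (1-\eta_n)a_n + \rho_n$ and rearranging, one obtains the clean multiplicative inequality
\[
a_{n+1} - \varepsilon \le (1-\eta_n)\,(a_n - \varepsilon), \qquad n \ge N,
\]
which is the pivotal reformulation: the additive error has been converted into a shift of the target level from $0$ to $\varepsilon$.

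Next I would iterate this inequality from $N$. Because each factor satisfies $0 < 1-\eta_n < 1$, telescoping gives
\[
a_{n+1} - \varepsilon \le \Big(\textstyle\prod_{k=N}^{n}(1-\eta_k)\Big)\,(a_N - \varepsilon), \qquad n \ge N.
\]
To estimate the product I would use the elementary bound $1 - \eta_k \le e^{-\eta_k}$, so that $\prod_{k=N}^{n}(1-\eta_k) \le \exp\!\big(-\sum_{k=N}^{n}\eta_k\big)$, and the hypothesis $\sum \eta_n = \infty$ then forces this product to tend to $0$ as $n \to \infty$. Consequently $\limsup_{n\to\infty}(a_n - \varepsilon) \le 0$, i.e. $\limsup_{n\to\infty} a_n \le \varepsilon$. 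Since $\varepsilon > 0$ was arbitrary and $a_n \ge 0$, letting $\varepsilon \to 0^{+}$ yields $\limsup_{n\to\infty} a_n \le 0 \le \liminf_{n\to\infty} a_n$, whence $\lim_{n\to\infty} a_n = 0$.

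The one place that needs care is the step that iterates the multiplicative inequality, because the factor $a_n - \varepsilon$ is not assumed to keep a constant sign. The cleanest fix is to track the nonnegative part $c_n := \max\{a_n - \varepsilon,\,0\}$: a short case check (on whether $a_n - \varepsilon$ is nonnegative or negative) shows that the same inequality $c_{n+1} \le (1-\eta_n)c_n$ persists for all $n \ge N$, so the telescoping and the product estimate apply verbatim to $\{c_n\}$, giving $c_n \to 0$ and hence $\limsup a_n \le \varepsilon$ without any sign assumption. I expect this bookkeeping, together with the standard product-to-zero fact, to be the only genuinely technical point; everything else is direct substitution.
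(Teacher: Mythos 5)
Your proof is correct, but note that the paper itself contains no proof of this statement: it appears as Lemma 1 cited from Weng's 1991 paper, so there is no in-paper argument to compare against. Your route is the standard one for this lemma: absorb the perturbation by choosing $N$ with $\rho_n<\varepsilon\eta_n$, rewrite the recursion as $a_{n+1}-\varepsilon\le(1-\eta_n)(a_n-\varepsilon)$, telescope, and drive the product to zero via $1-x\le e^{-x}$ and $\sum\eta_n=\infty$; this is in fact exactly the technique the paper itself uses in the proof of its Theorem 1 (inequalities (2.6)--(2.7)), so your argument fits the paper's toolkit seamlessly. One remark: the sign issue you flag at the end is not actually a gap, and your $c_n=\max\{a_n-\varepsilon,0\}$ device, while correct, is unnecessary. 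Since every factor $1-\eta_k$ is strictly positive, multiplying the inequality $a_k-\varepsilon\le(1-\eta_{k-1})(a_{k-1}-\varepsilon)$ through by $1-\eta_k$ preserves it regardless of the sign of $a_{k-1}-\varepsilon$, so the telescoped bound
\begin{equation*}
a_{n+1}-\varepsilon\le\Bigl(\prod_{k=N}^{n}(1-\eta_k)\Bigr)(a_N-\varepsilon)
\end{equation*}
holds directly; its right-hand side tends to $0$ whether $a_N-\varepsilon$ is positive or negative (it is a fixed constant times a product tending to $0$), which already gives $\limsup_{n\to\infty}a_n\le\varepsilon$ and hence, with $a_n\ge 0$ and $\varepsilon$ arbitrary, $\lim_{n\to\infty}a_n=0$.
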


\begin{lemma}
\cite{Data Is 2} Let $\left\{ a_{n}\right\} _{n=0}^{\infty }$ be a
nonnegative sequence for which one assumes there exists $n_{0}\in 
\mathbb{N}
$, such that for all $n\geq n_{0}$ one has satisfied the inequality 
\begin{equation}
a_{n+1}\leq \left( 1-\mu _{n}\right) a_{n}+\mu _{n}\eta _{n}\text{,}
\label{eqn11}
\end{equation}
where $\mu _{n}\in \left( 0,1\right) ,$ for all $n\in 
\mathbb{N}
$, $\sum\limits_{n=0}^{\infty }\mu _{n}=\infty $ and $\eta _{n}\geq 0$, $%
\forall n\in 
\mathbb{N}
$. Then the following inequality holds 
\begin{equation}
0\leq \lim \sup_{n\rightarrow \infty }a_{n}\leq \lim \sup_{n\rightarrow
\infty }\eta _{n}.  \label{eqn12}
\end{equation}
\end{lemma}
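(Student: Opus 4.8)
The plan is to reduce the recursive inequality \eqref{eqn11} to a clean homogeneous one by subtracting off the conjectured asymptotic bound, and then to argue by a case analysis on the sign of the shifted sequence. The lower bound $0\le\limsup_{n\to\infty}a_{n}$ is immediate from the hypothesis $a_{n}\ge 0$, so all the work lies in the upper estimate. Set $L:=\limsup_{n\to\infty}\eta_{n}$; if $L=\infty$ there is nothing to prove, so I would assume $L<\infty$, fix an arbitrary $\varepsilon>0$, and choose $N\ge n_{0}$ with $\eta_{n}\le L+\varepsilon=:M$ for all $n\ge N$. Inserting this into \eqref{eqn11} gives, for every $n\ge N$,
\[
a_{n+1}\le\left(1-\mu_{n}\right)a_{n}+\mu_{n}M .
\]

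Subtracting $M$ from both sides and factoring yields the homogeneous form $a_{n+1}-M\le\left(1-\mu_{n}\right)\left(a_{n}-M\right)$. Writing $d_{n}:=a_{n}-M$, the goal becomes to show $\limsup_{n\to\infty}d_{n}\le 0$. The central observation is that, since $\mu_{n}\in\left(0,1\right)$, the factor $1-\mu_{n}$ is strictly positive, so the sign of $d_{n}$ is propagated in a controlled way. If $d_{n}\le 0$ for some $n\ge N$, then $d_{n+1}\le\left(1-\mu_{n}\right)d_{n}\le 0$, and by induction $d_{m}\le 0$ for all $m\ge n$, whence $\limsup_{n\to\infty}d_{n}\le 0$ at once. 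In the complementary case $d_{n}>0$ for every $n\ge N$, iterating the inequality gives
\[
0<d_{n}\le d_{N}\prod\nolimits_{k=N}^{n-1}\left(1-\mu_{k}\right).
\]

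Using the elementary estimate $1-\mu_{k}\le e^{-\mu_{k}}$, this product is bounded above by $\exp\!\left(-\sum_{k=N}^{n-1}\mu_{k}\right)$, which tends to $0$ because $\sum_{n=0}^{\infty}\mu_{n}=\infty$; hence $d_{n}\to 0$ and again $\limsup_{n\to\infty}d_{n}\le 0$. In either case $\limsup_{n\to\infty}a_{n}\le M=L+\varepsilon$, and since $\varepsilon>0$ is arbitrary I conclude $\limsup_{n\to\infty}a_{n}\le L=\limsup_{n\to\infty}\eta_{n}$, which together with the trivial lower bound establishes \eqref{eqn12}. I expect the only genuinely delicate points to be the vanishing of the tail product $\prod_{k=N}^{\infty}\left(1-\mu_{k}\right)$ deduced from the divergence of $\sum\mu_{n}$, and the sign bookkeeping in the case split, since the shifted sequence $d_{n}$ need not be nonnegative and so the homogeneous inequality cannot simply be iterated without first controlling where $d_{n}$ becomes nonpositive.
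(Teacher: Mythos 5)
Your proof is correct, but there is nothing in the paper to compare it against: the authors state this lemma as a known result imported from the reference of Soltuz and Grosan (cited as \cite{Data Is 2}) and give no proof of it, using it only as a black box in the proof of their data dependence theorem. Your argument is a legitimate, self-contained proof of the cited statement: the lower bound is immediate from $a_{n}\geq 0$; the shift $d_{n}:=a_{n}-(L+\varepsilon)$ correctly converts \eqref{eqn11} into the homogeneous inequality $d_{n+1}\leq (1-\mu_{n})d_{n}$; your two cases are exhaustive and each is handled soundly; and the vanishing of the product via $1-x\leq e^{-x}$ together with $\sum_{n}\mu_{n}=\infty$ is exactly the device this paper itself uses in the proof of Theorem 1, at display \eqref{eqn19}, so your proof is stylistically consonant with the paper. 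One simplification is worth noting: the case analysis, and the ``sign bookkeeping'' you flag as delicate at the end, are actually unnecessary. The homogeneous inequality can be iterated regardless of the sign of $d_{n}$, since each step only multiplies an inequality by the positive number $1-\mu_{k}$; this yields
\[
d_{n}\leq d_{N}\prod\limits_{k=N}^{n-1}\left( 1-\mu_{k}\right) \qquad (n>N)
\]
unconditionally, and because the product is positive and tends to $0$ while $d_{N}$ is a fixed real number, the right-hand side tends to $0$ whatever the sign of $d_{N}$. Hence $\limsup_{n\rightarrow \infty }d_{n}\leq 0$ in one stroke, and letting $\varepsilon \downarrow 0$ finishes as you do. Your version with the case split is correct as written; it is merely longer than it needs to be.
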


\section{Main Results}

\begin{theorem}
Let $S$ be a nonempty closed convex subset of a Banach space $B$ and $%
T:S\rightarrow S$ be a contraction map satisfying condition (1.2). Let $%
\left\{ p_{n}\right\} _{n=0}^{\infty }$ be an iterative sequence generated
by (1.7) with real sequences $\left\{ \alpha _{n}^{i}\right\} _{n=0}^{\infty
}$, $i\in \left\{ 1,2,3\right\} $ in $\left[ 0,1\right] $ satisfying $%
\sum_{k=0}^{n}\alpha _{k}^{1}=\infty $. Then $\ \left\{ p_{n}\right\}
_{n=0}^{\infty }$ converges to a unique fixed point of $T$, say $x_{\ast }$.
\end{theorem}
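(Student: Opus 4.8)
The plan is to establish convergence by bounding the error $\|p_n - x_*\|$ in terms of the contraction constant $\delta$ and the control sequence $\{\alpha_n^1\}$, then invoking a standard limit argument. First I would recall that since $T$ is a contraction on the closed convex set $S$ with $\delta \in (0,1)$, the Banach fixed point theorem guarantees the existence and uniqueness of a fixed point $x_* \in S$; convexity ensures that the iterates $\{p_n\}, \{q_n\}, \{r_n\}$ remain in $S$, so the scheme is well defined. The substantive work is purely the error estimate.

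The key steps proceed from the innermost line of $(1.7)$ outward. Starting from $r_n = (1-\alpha_n^3)p_n + \alpha_n^3 Tp_n$, I would use $Tx_* = x_*$, the triangle inequality, and $(1.2)$ to obtain
\begin{equation}
\|r_n - x_*\| \leq \left[1 - \alpha_n^3(1-\delta)\right]\|p_n - x_*\| \leq \|p_n - x_*\|.
\end{equation}
Next, substituting into $q_n = (1-\alpha_n^2)Tp_n + \alpha_n^2 Tr_n$ and applying $(1.2)$ to each term gives
\begin{equation}
\|q_n - x_*\| \leq \delta\left[(1-\alpha_n^2)\|p_n - x_*\| + \alpha_n^2\|r_n - x_*\|\right] \leq \delta\|p_n - x_*\|,
\end{equation}
where the last inequality uses the bound on $\|r_n - x_*\|$. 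Finally, from $p_{n+1} = (1-\alpha_n^1)Tp_n + \alpha_n^1 Tq_n$, the same reasoning yields
\begin{equation}
\|p_{n+1} - x_*\| \leq \delta\left[(1-\alpha_n^1)\|p_n - x_*\| + \alpha_n^1\|q_n - x_*\|\right] \leq \delta\left[1 - \alpha_n^1(1-\delta)\right]\|p_n - x_*\|.
\end{equation}

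From this one-step contraction I would iterate the inequality to get a product bound
\begin{equation}
\|p_{n+1} - x_*\| \leq \|p_0 - x_*\|\,\prod_{k=0}^{n}\delta\left[1 - \alpha_k^1(1-\delta)\right].
\end{equation}
Using the elementary estimate $1 - \alpha_k^1(1-\delta) \leq e^{-\alpha_k^1(1-\delta)}$, the product is dominated by $\delta^{n+1}\exp\!\left(-(1-\delta)\sum_{k=0}^{n}\alpha_k^1\right)$, which tends to $0$ as $n \to \infty$ since $0 < \delta < 1$ and the hypothesis $\sum_{k=0}^n \alpha_k^1 = \infty$ forces the exponential factor to vanish. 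Hence $\|p_n - x_*\| \to 0$, proving convergence. I expect no genuine obstacle here: the estimates are routine telescoping applications of $(1.2)$, and the only point requiring care is ensuring each nested bound is substituted in the correct order so that every coefficient collapses to something at most $\delta < 1$; the divergence condition on $\{\alpha_n^1\}$ is in fact stronger than needed, since the factor $\delta^{n+1}$ alone already secures convergence.
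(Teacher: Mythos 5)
Your proof is correct and follows essentially the same route as the paper's: the same inner-to-outer estimates through $r_{n}$, $q_{n}$, $p_{n+1}$, the same telescoped product, and the same elementary inequality $1-x\leq e^{-x}$. The one genuine difference is that you retain the factor $\delta$ that arises because both terms in each line of (1.7) are images under $T$, obtaining the one-step bound $\left\Vert p_{n+1}-x_{\ast }\right\Vert \leq \delta \left[ 1-\alpha _{n}^{1}\left( 1-\delta \right) \right] \left\Vert p_{n}-x_{\ast }\right\Vert$, whereas the paper's chain of estimates discards it and keeps only $\left\Vert p_{n+1}-x_{\ast }\right\Vert \leq \left[ 1-\alpha _{n}^{1}\left( 1-\delta \right) \right] \left\Vert p_{n}-x_{\ast }\right\Vert$. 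This is more than a cosmetic sharpening: your resulting bound $\delta ^{n+1}\prod_{k=0}^{n}\left[ 1-\alpha _{k}^{1}\left( 1-\delta \right) \right] \left\Vert p_{0}-x_{\ast }\right\Vert$ tends to zero already because $\delta ^{n+1}\rightarrow 0$ and each factor of the product lies in $\left( 0,1\right]$, so, as you observe, the hypothesis $\sum_{k=0}^{\infty }\alpha _{k}^{1}=\infty$ is superfluous in your argument; the paper's weaker one-step estimate, by contrast, genuinely needs the divergence of the series to drive the exponential $e^{-\left( 1-\delta \right) \sum_{k=0}^{n}\alpha _{k}^{1}}$ to zero. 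Your sharper estimate is also consistent with the bound quoted from Karahan and \"{O}zdemir in the paper's proof of Theorem 3 (equation (2.25)), where the factor $\delta ^{n+1}$ is kept; and your remark that convexity of $S$ keeps the iterates in $S$ makes explicit a point the paper leaves tacit.
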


\begin{proof}
Picard-Banach theorem guarantees the existence and uniqueness of $x_{\ast }$%
. We will show that $p_{n}\rightarrow x_{\ast }$ as $n\rightarrow \infty $.
From (1.2) and (1.7) we have%
\begin{eqnarray}
\left\Vert r_{n}-x_{\ast }\right\Vert &=&\left\Vert \left( 1-\alpha
_{n}^{3}\right) p_{n}+\alpha _{n}^{3}Tp_{n}-\left( 1-\alpha _{n}^{3}+\alpha
_{n}^{3}\right) x_{\ast }\right\Vert  \notag \\
&\leq &\left( 1-\alpha _{n}^{3}\right) \left\Vert p_{n}-x_{\ast }\right\Vert
+\alpha _{n}^{3}\left\Vert Tp_{n}-Tx_{\ast }\right\Vert  \notag \\
&\leq &\left( 1-\alpha _{n}^{3}\right) \left\Vert p_{n}-x_{\ast }\right\Vert
+\alpha _{n}^{3}\delta \left\Vert p_{n}-x_{\ast }\right\Vert  \notag \\
&=&\left[ 1-\alpha _{n}^{3}\left( 1-\delta \right) \right] \left\Vert
p_{n}-x_{\ast }\right\Vert \text{,}  \label{eqn13}
\end{eqnarray}%
\begin{eqnarray}
\left\Vert q_{n}-x_{\ast }\right\Vert &\leq &\left( 1-\alpha _{n}^{2}\right)
\left\Vert Tp_{n}-Tx_{\ast }\right\Vert +\alpha _{n}^{2}\left\Vert
Tr_{n}-Tx_{\ast }\right\Vert  \notag \\
&\leq &\left( 1-\alpha _{n}^{2}\right) \delta \left\Vert p_{n}-x_{\ast
}\right\Vert +\alpha _{n}^{2}\delta \left\Vert r_{n}-x_{\ast }\right\Vert 
\notag \\
&\leq &\left\{ \left( 1-\alpha _{n}^{2}\right) \delta +\alpha _{n}^{2}\delta 
\left[ 1-\alpha _{n}^{3}\left( 1-\delta \right) \right] \right\} \left\Vert
p_{n}-x_{\ast }\right\Vert \text{,}  \label{eqn14}
\end{eqnarray}%
and%
\begin{eqnarray}
\left\Vert p_{n+1}-x_{\ast }\right\Vert &\leq &\left( 1-\alpha
_{n}^{1}\right) \left\Vert Tp_{n}-Tx_{\ast }\right\Vert +\alpha
_{n}^{1}\left\Vert Tq_{n}-Tx_{\ast }\right\Vert  \notag \\
&\leq &\left( 1-\alpha _{n}^{1}\right) \delta \left\Vert p_{n}-x_{\ast
}\right\Vert +\alpha _{n}^{1}\delta \left\Vert q_{n}-x_{\ast }\right\Vert 
\notag \\
&\leq &\left\{ \left( 1-\alpha _{n}^{1}\right) \delta \right.  \notag \\
&&\left. +\alpha _{n}^{1}\delta \left\{ \left( 1-\alpha _{n}^{2}\right)
\delta +\alpha _{n}^{2}\delta \left[ 1-\alpha _{n}^{3}\left( 1-\delta
\right) \right] \right\} \right\} \left\Vert p_{n}-x_{\ast }\right\Vert
\label{eqn15}
\end{eqnarray}%
Since $\delta \in \left( 0,1\right) $ and $\alpha _{n}^{i}\in \left[ 0,1%
\right] $, for all $n\in 
\mathbb{N}
$ and for each $i\in \left\{ 1,2,3\right\} $%
\begin{equation}
1-\alpha _{n}^{2}\left( 1-\delta \right) <1\text{,}  \label{eqn16}
\end{equation}%
\begin{equation}
1-\alpha _{n}^{3}\left( 1-\delta \right) <1\text{.}  \label{eqn17}
\end{equation}%
By using $\delta \in \left( 0,1\right) $, (2.4) and (2.5) in (2.3), we obtain%
\begin{eqnarray}
\left\Vert p_{n+1}-x_{\ast }\right\Vert &\leq &\left[ \left( 1-\alpha
_{n}^{1}\right) \delta \left\{ \left( 1-\alpha _{n}^{2}\right) \delta
+\alpha _{n}^{2}\delta \right\} +\alpha _{n}^{1}\delta \right] \left\Vert
p_{n}-x_{\ast }\right\Vert  \notag \\
&\leq &\left[ \left( 1-\alpha _{n}^{1}\right) \delta \left\{ 1-\alpha
_{n}^{2}\left( 1-\delta \right) \right\} +\alpha _{n}^{1}\delta \right]
\left\Vert p_{n}-x_{\ast }\right\Vert  \notag \\
&\leq &\left[ 1-\alpha _{n}^{1}\left( 1-\delta \right) \right] \left\Vert
p_{n}-x_{\ast }\right\Vert  \notag \\
&\leq &\cdots  \notag \\
&\leq &\prod\limits_{k=0}^{n}\left[ 1-\alpha _{k}^{1}\left( 1-\delta \right) %
\right] \left\Vert p_{0}-x_{\ast }\right\Vert \text{.}  \label{eqn18}
\end{eqnarray}%
It is well-known from the classical analysis that $1-x\leq e^{-x}$ for all $%
x\in \left[ 0,1\right] $. By considering this fact together with (2.6), we
obtain%
\begin{eqnarray}
\left\Vert p_{n+1}-x_{\ast }\right\Vert &\leq &\prod\limits_{k=0}^{n}\left[
1-\alpha _{k}^{1}\left( 1-\delta \right) \right] \left\Vert p_{0}-x_{\ast
}\right\Vert  \notag \\
&\leq &\frac{\left\Vert p_{0}-x_{\ast }\right\Vert }{e^{\left( 1-\delta
\right) \sum_{k=0}^{n}\alpha _{k}^{1}}}\text{.}  \label{eqn19}
\end{eqnarray}%
Taking the limit of both sides of inequality (2.7) yields $p_{n}\rightarrow
x_{\ast }$ as $n\rightarrow \infty $.
\end{proof}

\begin{theorem}
Let $S$, $B$ and $T$ with fixed point $x_{\ast }$ be as in Theorem 1. Let $%
\{u_{n}\}_{n=0}^{\infty }$, $\{p_{n}\}_{n=0}^{\infty }$be two iterative
sequences defined by (1.6) and (1.7) with real sequences $\left\{ \alpha
_{n}^{i}\right\} _{n=0}^{\infty }$, $i\in \left\{ 1,2,3\right\} $ in $\left[
0,1\right] $ satisfying $\sum_{k=0}^{n}\alpha _{k}^{1}=\infty $. Then the
following are equivalent:

(i) the iteration method (1.7) converges to the fixed point $x_{\ast }$ of $%
T $;

(ii) the CR iteration method (1.6) converges to the fixed point $x_{\ast }$
of $T$.
\end{theorem}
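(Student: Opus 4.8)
The plan is to reduce the whole equivalence to the single assertion $\lim_{n\to\infty}\|u_n-p_n\|=0$. Once this is available, each implication is immediate from the triangle inequality: if $p_n\to x_\ast$ then $\|u_n-x_\ast\|\le\|u_n-p_n\|+\|p_n-x_\ast\|\to0$, giving (i)$\Rightarrow$(ii), and the mirror estimate $\|p_n-x_\ast\|\le\|p_n-u_n\|+\|u_n-x_\ast\|$ gives (ii)$\Rightarrow$(i). So the real work is to produce a one-step contraction-type recursion for $\|u_n-p_n\|$ whose error term vanishes as soon as one of the two sequences is known to converge.

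First I would compare the inner stages of the two schemes, which have matching structure and so behave well under (1.2). Collecting coefficients exactly as in Theorem 1, one gets $\|y_n-r_n\|\le[1-\alpha_n^3(1-\delta)]\|u_n-p_n\|$ and then $\|v_n-q_n\|\le\delta\,\|u_n-p_n\|$ (the coefficient being $\delta[1-\alpha_n^2\alpha_n^3(1-\delta)]\le\delta$). The delicate point is the outermost step, where the schemes differ \emph{structurally}: $u_{n+1}$ mixes $v_n$ and $Tv_n$, whereas $p_{n+1}$ mixes $Tp_n$ and $Tq_n$. Writing
\[
u_{n+1}-p_{n+1}=(1-\alpha_n^1)(v_n-Tp_n)+\alpha_n^1(Tv_n-Tq_n),
\]
I am forced to control $\|v_n-Tp_n\|$, which unfolds into $\|Tu_n-p_n\|$ and $\|y_n-p_n\|$; these do \emph{not} collapse to $\|u_n-p_n\|$ alone, because they carry the fixed-point residual $\|Tp_n-p_n\|$, which I would bound by $\|Tp_n-p_n\|\le(1+\delta)\|p_n-x_\ast\|$. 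Carrying this bookkeeping through yields a recursion of the form
\[
\|u_{n+1}-p_{n+1}\|\le\delta\,\|u_n-p_n\|+\rho_n,
\]
in which the coefficient on $\|u_n-p_n\|$ equals $\delta[1-\alpha_n^1(1-\delta)]$ and hence stays $\le\delta<1$, while the error satisfies $\rho_n\le M\,\|p_n-x_\ast\|$ for a constant $M$ depending only on $\delta$.

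To finish (i)$\Rightarrow$(ii) I would take $\eta_n=1-\delta\in(0,1)$, so that $\sum\eta_n=\infty$ automatically and, under the hypothesis $p_n\to x_\ast$, also $\rho_n/\eta_n=\rho_n/(1-\delta)\to0$; Lemma 1 then gives $\|u_n-p_n\|\to0$, and the triangle inequality above yields $u_n\to x_\ast$. For (ii)$\Rightarrow$(i) I would run the mirror computation for $\|p_{n+1}-u_{n+1}\|$, peeling off the residual the other way as $\|Tu_n-u_n\|\le(1+\delta)\|u_n-x_\ast\|$, so that the resulting error is controlled by $\|u_n-x_\ast\|\to0$; the same application of Lemma 1 gives $\|p_n-u_n\|\to0$ and hence $p_n\to x_\ast$.

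The main obstacle is precisely this bookkeeping of the residual terms $\|Tp_n-p_n\|$ and $\|Tu_n-u_n\|$: the art is to route them into the vanishing error slot $\rho_n$ rather than letting them inflate the contraction coefficient, and to choose which fixed-point distance ($\|p_n-x_\ast\|$ or $\|u_n-x_\ast\|$) the residual is expressed through so that it is the \emph{converging} sequence that kills $\rho_n$ in each direction. Everything else is routine coefficient collection in the spirit of the proof of Theorem 1, and the contraction constant $\delta<1$ together with Lemma 1 does the rest.
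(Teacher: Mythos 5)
Your proposal is correct, and although it shares the paper's overall skeleton --- compare the two schemes stage by stage, derive a one-step recursion for $\Vert u_{n}-p_{n}\Vert$, invoke Lemma 1, and finish with the triangle inequality --- it departs from the paper at exactly the point you flag as delicate, and the departure is an improvement. The paper collects its coefficients into $\Vert p_{n+1}-u_{n+1}\Vert \leq \left[ 1-\alpha _{n}^{1}\left( 1-\delta \right) \right] \Vert p_{n}-u_{n}\Vert +\left( 1-\alpha _{n}^{1}\right) \left( 2+\alpha _{n}^{2}\delta \alpha _{n}^{3}\right) \left( 1+\delta \right) \Vert p_{n}-x_{\ast }\Vert$ and applies Lemma 1 with $\eta _{n}=\alpha _{n}^{1}\left( 1-\delta \right)$; this is why it needs $\sum_{k}\alpha _{k}^{1}=\infty$, and it silently requires $\rho _{n}/\eta _{n}\rightarrow 0$, i.e. $\Vert p_{n}-x_{\ast }\Vert /\alpha _{n}^{1}\rightarrow 0$, a condition the paper never verifies and which is not automatic when $\alpha _{n}^{1}\rightarrow 0$ (e.g. $\alpha _{n}^{1}=1/n$ satisfies the stated hypotheses). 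Your bookkeeping keeps the factor $\delta$ in front of the entire coefficient --- structurally because $v_{n}$ is itself a convex combination of $T$-images, so every leading difference is a difference of $T$-images --- giving $\Vert u_{n+1}-p_{n+1}\Vert \leq \delta \left[ 1-\alpha _{n}^{1}\left( 1-\delta \right) \right] \Vert u_{n}-p_{n}\Vert +\rho _{n}$ with $\rho _{n}\leq \delta \left( 1+\delta \right) \Vert p_{n}-x_{\ast }\Vert$ (I checked these coefficients; they are right). You may then take the constant $\eta _{n}=1-\delta$, so $\sum \eta _{n}=\infty$ is automatic, the divergence hypothesis on $\alpha _{n}^{1}$ becomes superfluous for the equivalence itself, and $\rho _{n}/\eta _{n}\rightarrow 0$ follows immediately from the assumed convergence $p_{n}\rightarrow x_{\ast }$. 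The mirror direction, routing the residual through $\Vert u_{n}-Tu_{n}\Vert \leq \left( 1+\delta \right) \Vert u_{n}-x_{\ast }\Vert$, works the same way. In short: same strategy as the paper, but your sharper coefficient bound makes the application of Lemma 1 unconditional and closes a latent gap in the published argument.
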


\begin{proof}
We will prove (i)$\Rightarrow $(ii), that is, if iteration method (1.7)
converges to $x_{\ast }$, then CR iteration method (1.6) does too. Now by
using iteration method (1.7), CR iteration method (1.6) and condition (1.2),
we have%
\begin{eqnarray}
\left\Vert p_{n+1}-u_{n+1}\right\Vert &=&\left\Vert \left( 1-\alpha
_{n}^{1}\right) Tp_{n}+\alpha _{n}^{1}Tq_{n}-\left( 1-\alpha _{n}^{1}\right)
v_{n}-\alpha _{n}^{1}Tv_{n}\right\Vert  \notag \\
&\leq &\left( 1-\alpha _{n}^{1}\right) \left\Vert Tp_{n}-v_{n}\right\Vert
+\alpha _{n}^{1}\left\Vert Tq_{n}-Tv_{n}\right\Vert  \notag \\
&\leq &\left( 1-\alpha _{n}^{1}\right) \left\Vert p_{n}-v_{n}\right\Vert
+\alpha _{n}^{1}\delta \left\Vert q_{n}-v_{n}\right\Vert +\left( 1-\alpha
_{n}^{1}\right) \left\Vert p_{n}-Tp_{n}\right\Vert  \notag \\
&=&\left( 1-\alpha _{n}^{1}\right) \left\Vert \left( 1-\alpha
_{n}^{2}+\alpha _{n}^{2}\right) p_{n}-\left( 1-\alpha _{n}^{2}\right)
Tu_{n}-\alpha _{n}^{2}Ty_{n}\right\Vert  \notag \\
&&+\alpha _{n}^{1}\delta \left\Vert q_{n}-v_{n}\right\Vert +\left( 1-\alpha
_{n}^{1}\right) \left\Vert p_{n}-Tp_{n}\right\Vert  \notag \\
&\leq &\left( 1-\alpha _{n}^{1}\right) \left( 1-\alpha _{n}^{2}\right)
\left\Vert p_{n}-Tu_{n}\right\Vert +\left( 1-\alpha _{n}^{1}\right) \alpha
_{n}^{2}\left\Vert p_{n}-Ty_{n}\right\Vert  \notag \\
&&+\alpha _{n}^{1}\delta \left\Vert q_{n}-v_{n}\right\Vert +\left( 1-\alpha
_{n}^{1}\right) \left\Vert p_{n}-Tp_{n}\right\Vert  \notag \\
&\leq &\left( 1-\alpha _{n}^{1}\right) \left( 1-\alpha _{n}^{2}\right)
\left\{ \left\Vert p_{n}-Tp_{n}\right\Vert +\left\Vert
Tp_{n}-Tu_{n}\right\Vert \right\}  \notag \\
&&+\left( 1-\alpha _{n}^{1}\right) \alpha _{n}^{2}\left\{ \left\Vert
p_{n}-Tp_{n}\right\Vert +\left\Vert Tp_{n}-Ty_{n}\right\Vert \right\}  \notag
\\
&&+\alpha _{n}^{1}\delta \left\Vert q_{n}-v_{n}\right\Vert +\left( 1-\alpha
_{n}^{1}\right) \left\Vert p_{n}-Tp_{n}\right\Vert  \notag \\
&\leq &\left( 1-\alpha _{n}^{1}\right) \left( 1-\alpha _{n}^{2}\right)
\delta \left\Vert p_{n}-u_{n}\right\Vert +\left( 1-\alpha _{n}^{1}\right)
\left( 1-\alpha _{n}^{2}\right) \left\Vert p_{n}-Tp_{n}\right\Vert  \notag \\
&&+\left( 1-\alpha _{n}^{1}\right) \alpha _{n}^{2}\left\Vert
p_{n}-Tp_{n}\right\Vert +\left( 1-\alpha _{n}^{1}\right) \alpha
_{n}^{2}\delta \left\Vert p_{n}-y_{n}\right\Vert  \notag \\
&&+\alpha _{n}^{1}\delta \left\Vert q_{n}-v_{n}\right\Vert +\left( 1-\alpha
_{n}^{1}\right) \left\Vert p_{n}-Tp_{n}\right\Vert  \notag \\
&=&\left( 1-\alpha _{n}^{1}\right) \left( 1-\alpha _{n}^{2}\right) \delta
\left\Vert p_{n}-u_{n}\right\Vert  \notag \\
&&+\left( 1-\alpha _{n}^{1}\right) \alpha _{n}^{2}\delta \left\Vert \left(
1-\alpha _{n}^{3}+\alpha _{n}^{3}\right) p_{n}-\left( 1-\alpha
_{n}^{3}\right) u_{n}-\alpha _{n}^{3}Tu_{n}\right\Vert  \notag \\
&&+\alpha _{n}^{1}\delta \left\Vert q_{n}-v_{n}\right\Vert +2\left( 1-\alpha
_{n}^{1}\right) \left\Vert p_{n}-Tp_{n}\right\Vert  \notag \\
&\leq &\left( 1-\alpha _{n}^{1}\right) \left( 1-\alpha _{n}^{2}\right)
\delta \left\Vert p_{n}-u_{n}\right\Vert  \notag \\
&&+\left( 1-\alpha _{n}^{1}\right) \alpha _{n}^{2}\delta \left( 1-\alpha
_{n}^{3}\right) \left\Vert p_{n}-u_{n}\right\Vert +\left( 1-\alpha
_{n}^{1}\right) \alpha _{n}^{2}\delta \alpha _{n}^{3}\left\Vert
p_{n}-Tu_{n}\right\Vert  \notag \\
&&+\alpha _{n}^{1}\delta \left\Vert q_{n}-v_{n}\right\Vert +2\left( 1-\alpha
_{n}^{1}\right) \left\Vert p_{n}-Tp_{n}\right\Vert  \notag \\
&\leq &\left( 1-\alpha _{n}^{1}\right) \left( 1-\alpha _{n}^{2}\right)
\delta \left\Vert p_{n}-u_{n}\right\Vert +\left( 1-\alpha _{n}^{1}\right)
\alpha _{n}^{2}\delta \left( 1-\alpha _{n}^{3}\right) \left\Vert
p_{n}-u_{n}\right\Vert  \notag \\
&&+\left( 1-\alpha _{n}^{1}\right) \alpha _{n}^{2}\delta \alpha
_{n}^{3}\left\Vert p_{n}-Tp_{n}\right\Vert +\left( 1-\alpha _{n}^{1}\right)
\alpha _{n}^{2}\delta \alpha _{n}^{3}\delta \left\Vert p_{n}-u_{n}\right\Vert
\notag \\
&&+\alpha _{n}^{1}\delta \left\Vert q_{n}-v_{n}\right\Vert +2\left( 1-\alpha
_{n}^{1}\right) \left\Vert p_{n}-Tp_{n}\right\Vert  \notag \\
&=&\left\{ \left( 1-\alpha _{n}^{1}\right) \left( 1-\alpha _{n}^{2}\right)
\delta +\left( 1-\alpha _{n}^{1}\right) \alpha _{n}^{2}\delta \left[
1-\alpha _{n}^{3}\left( 1-\delta \right) \right] \right\} \left\Vert
p_{n}-u_{n}\right\Vert  \notag \\
&&+\alpha _{n}^{1}\delta \left\Vert q_{n}-v_{n}\right\Vert +\left( 1-\alpha
_{n}^{1}\right) \left( 2+\alpha _{n}^{2}\delta \alpha _{n}^{3}\right)
\left\Vert p_{n}-Tp_{n}\right\Vert \text{,}  \label{eqn20}
\end{eqnarray}%
\begin{eqnarray}
\left\Vert q_{n}-v_{n}\right\Vert &=&\left\Vert \left( 1-\alpha
_{n}^{2}\right) Tp_{n}+\alpha _{n}^{2}Tr_{n}-\left( 1-\alpha _{n}^{2}\right)
Tu_{n}-\alpha _{n}^{2}Ty_{n}\right\Vert  \notag \\
&\leq &\left( 1-\alpha _{n}^{2}\right) \delta \left\Vert
p_{n}-u_{n}\right\Vert +\alpha _{n}^{2}\delta \left\Vert
r_{n}-y_{n}\right\Vert  \notag \\
&=&\left( 1-\alpha _{n}^{2}\right) \delta \left\Vert p_{n}-u_{n}\right\Vert 
\notag \\
&&+\alpha _{n}^{2}\delta \left\Vert \left( 1-\alpha _{n}^{3}\right)
p_{n}+\alpha _{n}^{3}Tp_{n}-\left( 1-\alpha _{n}^{3}\right) u_{n}-\alpha
_{n}^{3}Tu_{n}\right\Vert  \notag \\
&\leq &\left( 1-\alpha _{n}^{2}\right) \delta \left\Vert
p_{n}-u_{n}\right\Vert +\alpha _{n}^{2}\delta \left[ 1-\alpha _{n}^{3}\left(
1-\delta \right) \right] \left\Vert p_{n}-u_{n}\right\Vert  \notag \\
&\leq &\left( 1-\alpha _{n}^{2}\right) \left\Vert p_{n}-u_{n}\right\Vert
+\alpha _{n}^{2}\delta \left\Vert p_{n}-u_{n}\right\Vert  \notag \\
&=&\left[ 1-\alpha _{n}^{2}\left( 1-\delta \right) \right] \left\Vert
p_{n}-u_{n}\right\Vert \text{.}  \label{eqn21}
\end{eqnarray}%
Substituting (2.9) in (2.8) 
\begin{eqnarray}
\left\Vert p_{n+1}-u_{n+1}\right\Vert &\leq &\left\{ \left( 1-\alpha
_{n}^{1}\right) \left( 1-\alpha _{n}^{2}\right) \delta +\left( 1-\alpha
_{n}^{1}\right) \alpha _{n}^{2}\delta \left[ 1-\alpha _{n}^{3}\left(
1-\delta \right) \right] \right.  \notag \\
&&\left. +\alpha _{n}^{1}\delta \left[ 1-\alpha _{n}^{2}\left( 1-\delta
\right) \right] \right\} \left\Vert p_{n}-u_{n}\right\Vert  \notag \\
&&+\left( 1-\alpha _{n}^{1}\right) \left( 2+\alpha _{n}^{2}\delta \alpha
_{n}^{3}\right) \left\Vert p_{n}-Tp_{n}\right\Vert \text{.}  \label{eqn22}
\end{eqnarray}%
Since $\delta \in \left[ 0,1\right) $, $\alpha _{n}^{i}\in \left[ 0,1\right] 
$ for all $n\in 
\mathbb{N}
$ and for each $i\in \left\{ 1,2,3\right\} $,%
\begin{equation}
1-\alpha _{n}^{2}\left( 1-\delta \right) <1\text{,}  \label{eqn23}
\end{equation}%
\begin{equation}
1-\alpha _{n}^{3}\left( 1-\delta \right) <1\text{.}  \label{eqn24}
\end{equation}%
By applying inequalities (2.11) and (2.12) to (2.10), we obtain%
\begin{eqnarray}
\left\Vert p_{n+1}-u_{n+1}\right\Vert &\leq &\left[ 1-\alpha _{n}^{1}\left(
1-\delta \right) \right] \left\Vert p_{n}-u_{n}\right\Vert  \notag \\
&&+\left( 1-\alpha _{n}^{1}\right) \left( 2+\alpha _{n}^{2}\delta \alpha
_{n}^{3}\right) \left\Vert p_{n}-Tp_{n}\right\Vert \text{.}  \label{eqn25}
\end{eqnarray}%
Using the fact $x_{\ast }=Tx_{\ast }$ and triangle inequality for norms, we
derive%
\begin{eqnarray}
\left\Vert p_{n}-Tp_{n}\right\Vert &=&\left\Vert p_{n}-x_{\ast }+Tx_{\ast
}-Tp_{n}\right\Vert  \notag \\
&\leq &\left\Vert p_{n}-x_{\ast }\right\Vert +\left\Vert Tx_{\ast
}-Tp_{n}\right\Vert  \notag \\
&\leq &\left( 1+\delta \right) \left\Vert p_{n}-x_{\ast }\right\Vert \text{.}
\label{eqn26}
\end{eqnarray}%
Substituting (2.14) in (2.13)%
\begin{eqnarray}
\left\Vert p_{n+1}-u_{n+1}\right\Vert &\leq &\left[ 1-\alpha _{n}^{1}\left(
1-\delta \right) \right] \left\Vert p_{n}-u_{n}\right\Vert  \notag \\
&&+\left( 1-\alpha _{n}^{1}\right) \left( 2+\alpha _{n}^{2}\delta \alpha
_{n}^{3}\right) \left( 1+\delta \right) \left\Vert p_{n}-x_{\ast
}\right\Vert \text{.}  \label{eqn27}
\end{eqnarray}%
Denote that%
\begin{eqnarray}
a_{n} &=&\left\Vert p_{n}-u_{n}\right\Vert \text{,}  \notag \\
\eta _{n} &=&\alpha _{n}^{1}\left( 1-\delta \right) \in \left( 0,1\right) 
\text{,}  \label{eqn28} \\
\rho _{n} &=&\left( 1-\alpha _{n}^{1}\right) \left( 2+\alpha _{n}^{2}\delta
\alpha _{n}^{3}\right) \left( 1+\delta \right) \left\Vert p_{n}-x_{\ast
}\right\Vert \text{.}  \notag
\end{eqnarray}%
Thus, an application of Lemma 1 to (2.15) yields $a_{n}=\left\Vert
p_{n}-u_{n}\right\Vert \rightarrow 0$ as $n\rightarrow \infty $. Also, since 
$\left\Vert u_{n}-x_{\ast }\right\Vert \leq \left\Vert
p_{n}-u_{n}\right\Vert +\left\Vert p_{n}-x_{\ast }\right\Vert $, we have $%
\left\Vert u_{n}-x_{\ast }\right\Vert \rightarrow 0$ as $n\rightarrow \infty 
$.

Next, we will prove (ii)$\Rightarrow $(i). \ Assume that $\left\Vert
u_{n}-x_{\ast }\right\Vert \rightarrow 0$ as $n\rightarrow \infty $. It
follows from CR iteration method (1.6), iteration method (1.7) and condition
(1.2) that%
\begin{eqnarray}
\left\Vert u_{n+1}-p_{n+1}\right\Vert &=&\left\Vert \left( 1-\alpha
_{n}^{1}\right) v_{n}+\alpha _{n}^{1}Tv_{n}-\left( 1-\alpha _{n}^{1}\right)
Tp_{n}-\alpha _{n}^{1}Tq_{n}\right\Vert  \notag \\
&\leq &\left( 1-\alpha _{n}^{1}\right) \left\Vert v_{n}-Tp_{n}\right\Vert
+\alpha _{n}^{1}\left\Vert Tv_{n}-Tq_{n}\right\Vert  \notag \\
&\leq &\left( 1-\alpha _{n}^{1}\right) \left\Vert \left( 1-\alpha
_{n}^{2}\right) Tu_{n}+\alpha _{n}^{2}Ty_{n}-Tp_{n}\right\Vert +\alpha
_{n}^{1}\delta \left\Vert v_{n}-q_{n}\right\Vert  \notag \\
&=&\left( 1-\alpha _{n}^{1}\right) \left\Vert \left( 1-\alpha
_{n}^{2}\right) Tu_{n}+\alpha _{n}^{2}Ty_{n}-\left( 1-\alpha _{n}^{2}+\alpha
_{n}^{2}\right) Tp_{n}\right\Vert  \notag \\
&&+\alpha _{n}^{1}\delta \left\Vert \left( 1-\alpha _{n}^{2}\right)
Tu_{n}+\alpha _{n}^{2}Ty_{n}-\left( 1-\alpha _{n}^{2}\right) Tp_{n}-\alpha
_{n}^{2}Tr_{n}\right\Vert  \notag \\
&\leq &\left( 1-\alpha _{n}^{1}\right) \left( 1-\alpha _{n}^{2}\right)
\left\Vert Tu_{n}-Tp_{n}\right\Vert +\left( 1-\alpha _{n}^{1}\right) \alpha
_{n}^{2}\left\Vert Ty_{n}-Tp_{n}\right\Vert  \notag \\
&&+\alpha _{n}^{1}\delta \left( 1-\alpha _{n}^{2}\right) \left\Vert
Tu_{n}-Tp_{n}\right\Vert +\alpha _{n}^{1}\delta \alpha _{n}^{2}\left\Vert
Ty_{n}-Tr_{n}\right\Vert  \notag \\
&\leq &\left( 1-\alpha _{n}^{1}\right) \left( 1-\alpha _{n}^{2}\right)
\delta \left\Vert u_{n}-p_{n}\right\Vert +\left( 1-\alpha _{n}^{1}\right)
\alpha _{n}^{2}\delta \left\Vert y_{n}-p_{n}\right\Vert  \notag \\
&&+\alpha _{n}^{1}\delta \left( 1-\alpha _{n}^{2}\right) \delta \left\Vert
u_{n}-p_{n}\right\Vert +\alpha _{n}^{1}\delta \alpha _{n}^{2}\delta
\left\Vert y_{n}-r_{n}\right\Vert  \notag \\
&=&\left\{ \left( 1-\alpha _{n}^{1}\right) \left( 1-\alpha _{n}^{2}\right)
\delta +\alpha _{n}^{1}\delta \left( 1-\alpha _{n}^{2}\right) \delta
\right\} \left\Vert u_{n}-p_{n}\right\Vert  \notag \\
&&+\left( 1-\alpha _{n}^{1}\right) \alpha _{n}^{2}\delta \left\Vert \left(
1-\alpha _{n}^{3}\right) u_{n}+\alpha _{n}^{3}Tu_{n}-\left( 1-\alpha
_{n}^{3}+\alpha _{n}^{3}\right) p_{n}\right\Vert  \notag \\
&&+\alpha _{n}^{1}\delta \alpha _{n}^{2}\delta \left\Vert \left( 1-\alpha
_{n}^{3}\right) u_{n}+\alpha _{n}^{3}Tu_{n}-\left( 1-\alpha _{n}^{3}\right)
p_{n}-\alpha _{n}^{3}Tp_{n}\right\Vert  \notag \\
&\leq &\left\{ \left( 1-\alpha _{n}^{1}\right) \left( 1-\alpha
_{n}^{2}\right) \delta +\alpha _{n}^{1}\delta \left( 1-\alpha
_{n}^{2}\right) \delta \right\} \left\Vert u_{n}-p_{n}\right\Vert  \notag \\
&&+\left( 1-\alpha _{n}^{1}\right) \alpha _{n}^{2}\delta \left( 1-\alpha
_{n}^{3}\right) \left\Vert u_{n}-p_{n}\right\Vert +\left( 1-\alpha
_{n}^{1}\right) \alpha _{n}^{2}\delta \alpha _{n}^{3}\left\Vert
Tu_{n}-p_{n}\right\Vert  \notag \\
&&+\alpha _{n}^{1}\delta \alpha _{n}^{2}\delta \left( 1-\alpha
_{n}^{3}\right) \left\Vert u_{n}-p_{n}\right\Vert +\alpha _{n}^{1}\delta
\alpha _{n}^{2}\delta \alpha _{n}^{3}\left\Vert Tu_{n}-Tp_{n}\right\Vert 
\notag \\
&\leq &\left\{ \left( 1-\alpha _{n}^{1}\right) \left( 1-\alpha
_{n}^{2}\right) \delta +\alpha _{n}^{1}\delta \left( 1-\alpha
_{n}^{2}\right) \delta \right.  \notag \\
&&\left. +\left( 1-\alpha _{n}^{1}\right) \alpha _{n}^{2}\delta \left(
1-\alpha _{n}^{3}\right) +\left( 1-\alpha _{n}^{1}\right) \alpha
_{n}^{2}\delta \alpha _{n}^{3}\right.  \notag \\
&&\left. +\alpha _{n}^{1}\delta \alpha _{n}^{2}\delta \left( 1-\alpha
_{n}^{3}\right) +\alpha _{n}^{1}\delta \alpha _{n}^{2}\delta \alpha
_{n}^{3}\delta \right\} \left\Vert u_{n}-p_{n}\right\Vert  \notag \\
&&+\left( 1-\alpha _{n}^{1}\right) \alpha _{n}^{2}\delta \alpha
_{n}^{3}\left\Vert u_{n}-Tu_{n}\right\Vert  \notag \\
&=&\left\{ \left( 1-\alpha _{n}^{1}\right) \left( 1-\alpha _{n}^{2}\right)
\delta +\alpha _{n}^{1}\delta \left( 1-\alpha _{n}^{2}\right) \delta \right.
\notag \\
&&\left. +\left( 1-\alpha _{n}^{1}\right) \alpha _{n}^{2}\delta \left(
1-\alpha _{n}^{3}\right) +\left( 1-\alpha _{n}^{1}\right) \alpha
_{n}^{2}\delta \alpha _{n}^{3}\right.  \notag \\
&&\left. +\alpha _{n}^{1}\delta \alpha _{n}^{2}\delta \left[ 1-\alpha
_{n}^{3}\left( 1-\delta \right) \right] \right\} \left\Vert
u_{n}-p_{n}\right\Vert  \notag \\
&&+\left( 1-\alpha _{n}^{1}\right) \alpha _{n}^{2}\delta \alpha
_{n}^{3}\left\Vert u_{n}-Tu_{n}\right\Vert \text{.}  \label{eqn29}
\end{eqnarray}%
or,%
\begin{eqnarray}
\left\Vert u_{n+1}-p_{n+1}\right\Vert &\leq &\left\{ \left[ 1-\alpha
_{n}^{1}\left( 1-\delta \right) \right] \left( 1-\alpha _{n}^{2}\right)
\delta +\left( 1-\alpha _{n}^{1}\right) \alpha _{n}^{2}\delta \right.  \notag
\\
&&\left. +\alpha _{n}^{1}\delta \alpha _{n}^{2}\delta \left[ 1-\alpha
_{n}^{3}\left( 1-\delta \right) \right] \right\} \left\Vert
u_{n}-p_{n}\right\Vert  \notag \\
&&+\left( 1-\alpha _{n}^{1}\right) \alpha _{n}^{2}\delta \alpha
_{n}^{3}\left\Vert u_{n}-Tu_{n}\right\Vert \text{.}  \label{eqn30}
\end{eqnarray}%
Since $\delta \in \left[ 0,1\right) $, $\alpha _{n}^{i}\in \left[ 0,1\right] 
$ for all $n\in 
\mathbb{N}
$ and for each $i\in \left\{ 1,2,3\right\} $,%
\begin{equation}
1-\alpha _{n}^{2}\left( 1-\delta \right) <1\text{,}  \label{eqn31}
\end{equation}%
\begin{equation}
1-\alpha _{n}^{3}\left( 1-\delta \right) <1\text{.}  \label{eqn32}
\end{equation}%
By use of inequalities (2.19) and (2.20) in (2.18), we get%
\begin{eqnarray}
\left\Vert u_{n+1}-p_{n+1}\right\Vert &\leq &\left[ 1-\alpha _{n}^{1}\left(
1-\delta \right) \right] \left\Vert u_{n}-p_{n}\right\Vert  \notag \\
&&+\left( 1-\alpha _{n}^{1}\right) \alpha _{n}^{2}\delta \alpha
_{n}^{3}\left\Vert u_{n}-Tu_{n}\right\Vert \text{.}  \label{eqn33}
\end{eqnarray}%
Using the fact $x_{\ast }=Tx_{\ast }$ and triangle inequality for norms, we
derive%
\begin{equation}
\left\Vert u_{n}-Tu_{n}\right\Vert \leq \left( 1+\delta \right) \left\Vert
u_{n}-x_{\ast }\right\Vert \text{.}  \label{eqn34}
\end{equation}%
Hence, (2.21) becomes%
\begin{eqnarray}
\left\Vert u_{n+1}-p_{n+1}\right\Vert &\leq &\left[ 1-\alpha _{n}^{1}\left(
1-\delta \right) \right] \left\Vert u_{n}-p_{n}\right\Vert  \notag \\
&&+\left( 1-\alpha _{n}^{1}\right) \alpha _{n}^{2}\delta \alpha
_{n}^{3}\left( 1+\delta \right) \left\Vert u_{n}-x_{\ast }\right\Vert \text{.%
}  \label{eqn35}
\end{eqnarray}%
Define%
\begin{eqnarray}
a_{n} &=&\left\Vert u_{n}-p_{n}\right\Vert \text{,}  \notag \\
\eta _{n} &=&\alpha _{n}^{1}\left( 1-\delta \right) \in \left( 0,1\right) 
\text{,}  \label{eqn36} \\
\rho _{n} &=&\left( 1-\alpha _{n}^{1}\right) \alpha _{n}^{2}\delta \alpha
_{n}^{3}\left( 1+\delta \right) \left\Vert u_{n}-x_{\ast }\right\Vert \text{.%
}  \notag
\end{eqnarray}%
Thus, an application of Lemma 1 to (2.23) yields $a_{n}=\left\Vert
u_{n}-p_{n}\right\Vert \rightarrow 0$ as $n\rightarrow \infty $. Also, since 
$\left\Vert p_{n}-x_{\ast }\right\Vert \leq \left\Vert
p_{n}-u_{n}\right\Vert +\left\Vert u_{n}-x_{\ast }\right\Vert $, we have $%
\left\Vert p_{n}-x_{\ast }\right\Vert \rightarrow 0$ as $n\rightarrow \infty 
$.
\end{proof}

\begin{theorem}
Let $S$, $B$ and $T$ with fixed point $x_{\ast }$ be as in Theorem 1. Let $%
\left\{ \alpha _{n}^{i}\right\} _{n=0}^{\infty }$, $i\in \left\{
1,2,3\right\} $ be real sequences in $\left[ 0,1\right] $ satisfying (i) $%
\alpha _{1}\leq \alpha _{n}^{1}\leq 1$, $\alpha _{2}\leq \alpha _{n}^{2}\leq
1$ and $\alpha _{3}\leq \alpha _{n}^{3}\leq 1$, for all $n\in 
\mathbb{N}
$ and for some $\alpha _{1}$, $\alpha _{2}$, $\alpha _{3}>0$. For given $%
u_{0}=p_{0}\in S$, consider iterative sequences $\left\{ u_{n}\right\}
_{n=0}^{\infty }$ and $\left\{ p_{n}\right\} _{n=0}^{\infty }$ defined by
(1.6) and (1.7), respectively. Then $\left\{ p_{n}\right\} _{n=0}^{\infty }$
converges to $x_{\ast }$ faster than $\left\{ u_{n}\right\} _{n=0}^{\infty }$
does.
\end{theorem}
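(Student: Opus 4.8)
The plan is to read the conclusion through Definition 1: since Theorem 1 (and the parallel argument for (1.6)) already guarantees $p_n\to x_*$ and $u_n\to x_*$, it suffices to exhibit closed geometric bounds for the two errors and to evaluate the limit of their ratio. For $\{p_n\}$ I would reuse the estimate built inside the proof of Theorem 1: concatenating (2.1)--(2.3) yields the one-step contraction
$$\|p_{n+1}-x_*\|\le \delta\,c_n\,\|p_n-x_*\|,\qquad c_n=(1-\alpha_n^1)+\alpha_n^1\delta\bigl[1-\alpha_n^2\alpha_n^3(1-\delta)\bigr],$$
and iterating gives $\|p_n-x_*\|\le\bigl(\prod_{k=0}^{n-1}\delta c_k\bigr)\|p_0-x_*\|$.

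Next I would run the same three-line estimation on the CR scheme (1.6), bounding $\|y_n-x_*\|$, then $\|v_n-x_*\|$, then $\|u_{n+1}-x_*\|$, the only structural difference being that the outermost CR step carries $v_n$ (not $Tv_n$), so that the outer coefficient becomes $1-\alpha_n^1(1-\delta)$ rather than a bare $\delta$. This produces
$$\|u_{n+1}-x_*\|\le \delta\,d_n\,\|u_n-x_*\|,\qquad d_n=\bigl[1-\alpha_n^1(1-\delta)\bigr]\bigl[1-\alpha_n^2\alpha_n^3(1-\delta)\bigr],$$
hence $\|u_n-x_*\|\le\bigl(\prod_{k=0}^{n-1}\delta d_k\bigr)\|u_0-x_*\|$. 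Because $u_0=p_0$, the common factors $\delta^{n}$ and $\|p_0-x_*\|$ cancel, and comparing the two rates in the sense of Definition 1 reduces to controlling the product $\prod_{k=0}^{n-1} c_k/d_k$ (or its reciprocal).

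The decisive step --- and the one I expect to be the main obstacle --- is the term-by-term comparison of $c_k$ and $d_k$. A short simplification, using $(1-\delta)+\delta=1$, collapses their difference to a single sign-definite term: up to a common positive factor it equals $\alpha_k^2\alpha_k^3(1-\delta)(1-\alpha_k^1)$. Thus everything hinges on (a) the sign of this gap, which dictates which of the two schemes the limit $l$ of the ratio assigns the faster rate to, and (b) whether the associated series diverges, since only then does the relevant product tend to $0$. Here the control hypothesis is the essential tool: $\alpha_n^2\ge\alpha_2>0$ and $\alpha_n^3\ge\alpha_3>0$ keep the factor $\alpha_k^2\alpha_k^3(1-\delta)$ bounded away from $0$, so the gap can degenerate only through $1-\alpha_k^1$. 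The genuinely delicate point is therefore the behaviour of $\sum_k(1-\alpha_k^1)$: the hypothesis $\alpha_1\le\alpha_k^1\le1$ bounds $\alpha_k^1$ from below but permits $\alpha_k^1=1$, so establishing divergence of the controlling series --- and hence that the ratio tends to $0$ rather than to a positive limit --- is exactly where the argument must be pinned down, and I would expect to supplement condition (i) with the standing assumption $\sum_k\alpha_k^1=\infty$ carried over from Theorem 1.
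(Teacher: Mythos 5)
Your derivation of the two one-step factors is exactly what the paper does: $\delta c_n$ is the factor assembled in (2.1)--(2.3) of the proof of Theorem 1, $\delta d_n$ is the factor obtained for the CR scheme inside (2.26), and your identity $c_k-d_k=\alpha_k^2\alpha_k^3(1-\delta)(1-\alpha_k^1)$ is precisely the algebra hidden in the chain of implications (2.30). The genuine gap is the step you yourself flag as the obstacle, and the repair you propose does not close it. The paper never compares the running products with \emph{varying} coefficients: it first invokes hypothesis (i) to replace every $\alpha_k^i$ by the constant $\alpha_i\in(0,1)$, which turns both estimates into fixed geometric sequences --- the quoted Karahan--\"{O}zdemir bound $b_n=\Vert p_0-x_*\Vert\,\delta^{n+1}\bigl[1-\alpha_1\bigl(1-\delta\bigl(1-\alpha_2\alpha_3(1-\delta)\bigr)\bigr)\bigr]^{n+1}$ in (2.25) and the CR bound $a_n$ in (2.27)--(2.28). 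Then $\theta_n=a_n/b_n$ has \emph{constant} ratio $\theta_{n+1}/\theta_n<1$ (this is (2.30)--(2.31)), so $\theta_n\to0$ by the ratio test, and no divergence condition on the coefficients ever enters. Your route, by contrast, genuinely needs $\sum_k(1-\alpha_k^1)=\infty$ (given the lower bounds on $\alpha_k^2,\alpha_k^3$), and hypothesis (i) does not supply it: it permits $\alpha_n^1\equiv1$, in which case $c_k=d_k$ for every $k$ and your ratio of products is identically $1$. The supplement you suggest, $\sum_k\alpha_k^1=\infty$, is vacuous --- it already follows from $\alpha_k^1\ge\alpha_1>0$ --- and is irrelevant to the product you must drive to $0$; the condition that would rescue your argument is $\sum_k(1-\alpha_k^1)=\infty$, which the theorem does not assume. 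So, as written, your proof cannot be completed under the stated hypotheses; the missing idea is the paper's (Berinde-style) device of comparing the two frozen, constant-coefficient upper bounds, accepting that ``faster'' then means a statement about bounds rather than about the errors themselves.

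There is also a direction problem that you left unresolved. Your sign computation gives $d_k\le c_k$: the CR factor is the smaller one, so the only ratio that can possibly tend to $0$ is (CR bound)/(Karahan bound), and in the sense of Definition 1 this makes $\{u_n\}$, not $\{p_n\}$, the faster sequence --- the opposite of the statement's wording. This is not your error: the paper's own proof computes $\theta_n=a_n/b_n$ with $a_n$ the bound on $\Vert u_{n+1}-x_*\Vert$ and $b_n$ the bound on $\Vert p_{n+1}-x_*\Vert$, proves $\theta_n\to0$, and this supports the abstract's claim that CR converges faster than the Karahan--\"{O}zdemir scheme, while the theorem's wording and the closing sentence of its proof assert the reverse, with the roles of $\{p_n\}$ and $\{u_n\}$ apparently swapped. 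Your hedge (``the sign dictates which scheme is faster'') thus lands on a real inconsistency in the paper; but a finished proof must commit, and pushing your own computation to its conclusion yields the statement with the two sequences interchanged, which is also all that the paper's argument actually establishes.
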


\begin{proof}
The following equality was obtained in (\cite{Karahan}, Theorem 1) 
\begin{equation}
b_{n}=\left\Vert p_{0}-x_{\ast }\right\Vert \delta ^{n+1}\left[ 1-\alpha
_{1}\left( 1-\delta \left( 1-\alpha _{2}\alpha _{3}\left( 1-\delta \right)
\right) \right) \right] ^{n+1}\text{.}  \label{eqn37}
\end{equation}%
Using now (1.7) and (1.2) we have%
\begin{eqnarray}
\left\Vert u_{n+1}-x_{\ast }\right\Vert &=&\left\Vert \left( 1-\alpha
_{n}^{1}\right) v_{n}+\alpha _{n}^{1}Tv_{n}-x_{\ast }\right\Vert  \notag \\
&\leq &\left( 1-\alpha _{n}^{1}\right) \left\Vert v_{n}-x_{\ast }\right\Vert
+\alpha _{n}^{1}\left\Vert Tv_{n}-x_{\ast }\right\Vert  \notag \\
&\leq &\left[ \left( 1-\alpha _{n}^{1}\right) +\alpha _{n}^{1}\delta \right]
\left\Vert v_{n}-x_{\ast }\right\Vert  \notag \\
&\leq &\left[ \left( 1-\alpha _{n}^{1}\right) +\alpha _{n}^{1}\delta \right]
\left\Vert \left( 1-\alpha _{n}^{2}\right) Tu_{n}+\alpha
_{n}^{2}Ty_{n}-x_{\ast }\right\Vert  \notag \\
&\leq &\left[ \left( 1-\alpha _{n}^{1}\right) +\alpha _{n}^{1}\delta \right]
\left( 1-\alpha _{n}^{2}\right) \left\Vert Tu_{n}-x_{\ast }\right\Vert 
\notag \\
&&+\left[ \left( 1-\alpha _{n}^{1}\right) +\alpha _{n}^{1}\delta \right]
\alpha _{n}^{2}\left\Vert Ty_{n}-x_{\ast }\right\Vert  \notag \\
&\leq &\left[ \left( 1-\alpha _{n}\right) +\alpha _{n}^{1}\delta \right]
\left( 1-\alpha _{n}^{2}\right) \delta \left\Vert u_{n}-x_{\ast }\right\Vert
\notag \\
&&+\left[ \left( 1-\alpha _{n}^{1}\right) +\alpha _{n}^{1}\delta \right]
\alpha _{n}^{2}\delta \left\Vert y_{n}-x_{\ast }\right\Vert  \notag \\
&\leq &\left[ \left( 1-\alpha _{n}^{1}\right) +\alpha _{n}^{1}\delta \right]
\left( 1-\alpha _{n}^{2}\right) \delta \left\Vert u_{n}-x_{\ast }\right\Vert
\notag \\
&&+\left[ \left( 1-\alpha _{n}^{1}\right) +\alpha _{n}^{1}\delta \right]
\alpha _{n}^{2}\delta \left( 1-\alpha _{n}^{3}\right) \left\Vert
u_{n}-x_{\ast }\right\Vert  \notag \\
&&+\left[ \left( 1-\alpha _{n}^{1}\right) +\alpha _{n}^{1}\delta \right]
\alpha _{n}^{2}\delta \alpha _{n}^{3}\delta \left\Vert u_{n}-x_{\ast
}\right\Vert  \notag \\
&=&\left[ \left( 1-\alpha _{n}^{1}\right) +\alpha _{n}^{1}\delta \right]
\left\{ \left( 1-\alpha _{n}^{2}\right) \delta +\alpha _{n}^{2}\delta \left(
1-\alpha _{n}^{3}\right) +\alpha _{n}^{2}\delta \alpha _{n}^{3}\delta
\right\} \left\Vert u_{n}-x_{\ast }\right\Vert  \notag \\
&=&\left[ 1-\alpha _{n}^{1}\left( 1-\delta \right) \right] \left[ 1-\alpha
_{n}^{2}\alpha _{n}^{3}\left( 1-\delta \right) \right] \delta \left\Vert
u_{n}-x_{\ast }\right\Vert  \notag \\
&\leq &\cdots  \notag \\
&\leq &\prod\limits_{k=0}^{n}\left[ 1-\alpha _{k}^{1}\left( 1-\delta \right) %
\right] \left[ 1-\alpha _{k}^{2}\alpha _{k}^{3}\left( 1-\delta \right) %
\right] \delta \left\Vert u_{0}-x_{\ast }\right\Vert \text{.}  \label{eqn38}
\end{eqnarray}%
From assumption (i), we obtain%
\begin{equation}
\left\Vert u_{n+1}-x_{\ast }\right\Vert \leq \left\Vert u_{0}-x_{\ast
}\right\Vert \delta ^{n+1}\left[ 1-\alpha _{1}\left( 1-\delta \right) \right]
^{n+1}\left[ 1-\alpha _{2}\alpha _{3}\left( 1-\delta \right) \right] ^{n+1}%
\text{.}  \label{eqn39}
\end{equation}%
Let%
\begin{equation}
a_{n}=\left\Vert u_{0}-x_{\ast }\right\Vert \delta ^{n+1}\left[ 1-\alpha
_{1}\left( 1-\delta \right) \right] ^{n+1}\left[ 1-\alpha _{2}\alpha
_{3}\left( 1-\delta \right) \right] ^{n+1}\text{.}  \label{eqn40}
\end{equation}%
Define%
\begin{eqnarray}
\theta _{n} &=&\frac{a_{n}}{b_{n}}=\frac{\left\Vert u_{0}-x_{\ast
}\right\Vert \delta ^{n+1}\left[ 1-\alpha _{1}\left( 1-\delta \right) \right]
^{n+1}\left[ 1-\alpha _{2}\alpha _{3}\left( 1-\delta \right) \right] ^{n+1}}{%
\left\Vert p_{0}-x_{\ast }\right\Vert \delta ^{n+1}\left[ 1-\alpha
_{1}\left( 1-\delta \left( 1-\alpha _{2}\alpha _{3}\left( 1-\delta \right)
\right) \right) \right] ^{n+1}}  \notag \\
&=&\frac{\left[ 1-\alpha _{1}\left( 1-\delta \right) \right] ^{n+1}\left[
1-\alpha _{2}\alpha _{3}\left( 1-\delta \right) \right] ^{n+1}}{\left[
1-\alpha _{1}\left( 1-\delta \left( 1-\alpha _{2}\alpha _{3}\left( 1-\delta
\right) \right) \right) \right] ^{n+1}}  \label{eqn41}
\end{eqnarray}%
Since $\delta \in \left( 0,1\right) $ and $\alpha _{i}\in \left( 0,1\right) $
for each $i\in \left\{ 1,2,3\right\} $%
\begin{eqnarray}
\alpha _{1} &<&1  \notag \\
&\Rightarrow &\alpha _{1}\alpha _{2}\alpha _{3}\left( 1-\delta \right)
<\alpha _{2}\alpha _{3}\left( 1-\delta \right)  \notag \\
&\Rightarrow &\alpha _{2}\alpha _{3}\left( -1+\delta \right) +\alpha
_{1}\alpha _{2}\alpha _{3}\left( 1-\delta \right) <0  \notag \\
&\Rightarrow &-\alpha _{2}\alpha _{3}+\alpha _{2}\alpha _{3}\delta +\alpha
_{1}\alpha _{2}\alpha _{3}-\alpha _{1}\alpha _{2}\alpha _{3}\delta <0  \notag
\\
&\Rightarrow &-\alpha _{2}\alpha _{3}+\alpha _{2}\alpha _{3}\delta +\alpha
_{1}\alpha _{2}\alpha _{3}-2\alpha _{1}\alpha _{2}\alpha _{3}\delta <-\alpha
_{1}\alpha _{2}\alpha _{3}\delta  \notag \\
&\Rightarrow &\left\{ 
\begin{array}{c}
1-\alpha _{1}+\alpha _{1}\delta -\alpha _{2}\alpha _{3}+\alpha _{2}\alpha
_{3}\delta +\alpha _{1}\alpha _{2}\alpha _{3}-2\alpha _{1}\alpha _{2}\alpha
_{3}\delta +\alpha _{1}\alpha _{2}\alpha _{3}\delta ^{2} \\ 
<1-\alpha _{1}+\alpha _{1}\delta -\alpha _{1}\alpha _{2}\alpha _{3}\delta
+\alpha _{1}\alpha _{2}\alpha _{3}\delta ^{2}%
\end{array}%
\right.  \notag \\
&\Rightarrow &\left\{ 
\begin{array}{c}
1-\alpha _{1}+\alpha _{1}\delta -\alpha _{2}\alpha _{3}\left( 1-\delta
\right) +\alpha _{1}\alpha _{2}\alpha _{3}\left( 1-\delta \right) ^{2} \\ 
<1-\alpha _{1}+\alpha _{1}\delta -\alpha _{1}\alpha _{2}\alpha _{3}\delta
\left( 1-\delta \right)%
\end{array}%
\right.  \notag \\
&\Rightarrow &\left\{ 
\begin{array}{c}
1-\alpha _{1}\left( 1-\delta \right) -\left[ 1-\alpha _{1}\left( 1-\delta
\right) \right] \alpha _{2}\alpha _{3}\left( 1-\delta \right) \\ 
<1-\alpha _{1}+\alpha _{1}\delta \left( 1-\alpha _{2}\alpha _{3}\left(
1-\delta \right) \right)%
\end{array}%
\right.  \notag \\
&\Rightarrow &\left[ 1-\alpha _{1}\left( 1-\delta \right) \right] \left[
1-\alpha _{2}\alpha _{3}\left( 1-\delta \right) \right] <1-\alpha _{1}\left(
1-\delta \left( 1-\alpha _{2}\alpha _{3}\left( 1-\delta \right) \right)
\right)  \notag \\
&\Rightarrow &\frac{\left[ 1-\alpha _{1}\left( 1-\delta \right) \right] %
\left[ 1-\alpha _{2}\alpha _{3}\left( 1-\delta \right) \right] }{1-\alpha
_{1}\left( 1-\delta \left( 1-\alpha _{2}\alpha _{3}\left( 1-\delta \right)
\right) \right) }<1\text{,}  \label{eqn42}
\end{eqnarray}%
and thus, we have%
\begin{eqnarray}
\lim_{n\rightarrow \infty }\frac{\theta _{n+1}}{\theta _{n}}
&=&\lim_{n\rightarrow \infty }\frac{\frac{\left[ 1-\alpha _{1}\left(
1-\delta \right) \right] ^{n+2}\left[ 1-\alpha _{2}\alpha _{3}\left(
1-\delta \right) \right] ^{n+2}}{\left[ 1-\alpha _{1}\left( 1-\delta \left(
1-\alpha _{2}\alpha _{3}\left( 1-\delta \right) \right) \right) \right]
^{n+2}}}{\frac{\left[ 1-\alpha _{1}\left( 1-\delta \right) \right] ^{n+1}%
\left[ 1-\alpha _{2}\alpha _{3}\left( 1-\delta \right) \right] ^{n+1}}{\left[
1-\alpha _{1}\left( 1-\delta \left( 1-\alpha _{2}\alpha _{3}\left( 1-\delta
\right) \right) \right) \right] ^{n+1}}}  \notag \\
&=&\lim_{n\rightarrow \infty }\frac{\left[ 1-\alpha _{1}\left( 1-\delta
\right) \right] \left[ 1-\alpha _{2}\alpha _{3}\left( 1-\delta \right) %
\right] }{1-\alpha _{1}\left( 1-\delta \left( 1-\alpha _{2}\alpha _{3}\left(
1-\delta \right) \right) \right) }  \notag \\
&=&\frac{\left[ 1-\alpha _{1}\left( 1-\delta \right) \right] \left[ 1-\alpha
_{2}\alpha _{3}\left( 1-\delta \right) \right] }{1-\alpha _{1}\left(
1-\delta \left( 1-\alpha _{2}\alpha _{3}\left( 1-\delta \right) \right)
\right) }<1\text{.}  \label{eqn43}
\end{eqnarray}%
It thus follows from ratio test that $\sum\limits_{n=0}^{\infty }\theta
_{n}<\infty $. Hence, we have $\lim_{n\rightarrow \infty }\theta _{n}=0$
which implies that $\left\{ p_{n}\right\} _{n=0}^{\infty }$ is faster than $%
\left\{ u_{n}\right\} _{n=0}^{\infty }$.
\end{proof}

We are now able to establish the following data dependence result.

\begin{theorem}
Let $\widetilde{T}$ be an approximate operator of $T$ satisfying condition
(1.2). Let $\left\{ p_{n}\right\} _{n=0}^{\infty }$ be an iterative sequence
generated by (1.7) for $T$ and define an iterative sequence $\left\{ 
\widetilde{p}_{n}\right\} _{n=0}^{\infty }$ as follows%
\begin{equation}
\left\{ 
\begin{array}{c}
\widetilde{p}_{0}\in S\text{, \ \ \ \ \ \ \ \ \ \ \ \ \ \ \ \ \ \ \ \ \ \ \
\ \ \ \ \ \ \ \ \ \ \ \ \ \ \ \ \ } \\ 
\widetilde{p}_{n+1}=\left( 1-\alpha _{n}^{1}\right) \widetilde{T}\widetilde{p%
}_{n}+\alpha _{n}^{1}\widetilde{T}\widetilde{q}_{n}\text{, \ \ \ \ \ \ \ \ \
\ } \\ 
\widetilde{q}_{n}=\left( 1-\alpha _{n}^{2}\right) \widetilde{T}\widetilde{p}%
_{n}+\alpha _{n}^{2}\widetilde{T}\widetilde{r}_{n}\text{, \ \ \ \ \ \ } \\ 
\widetilde{r}_{n}=\left( 1-\alpha _{n}^{3}\right) \widetilde{p}_{n}+\alpha
_{n}^{3}\widetilde{T}\widetilde{p}_{n}\text{, }n\in 
\mathbb{N}
\text{,}%
\end{array}%
\right.  \label{eqn44}
\end{equation}%
where $\left\{ \alpha _{n}^{i}\right\} _{n=0}^{\infty }$, $i\in \left\{
1,2,3\right\} $ be real sequences in $\left[ 0,1\right] $ satisfying (i) $%
\frac{1}{2}\leq \alpha _{n}^{1}$ for all $n\in 
\mathbb{N}
$, and (ii) $\sum\limits_{n=0}^{\infty }\alpha _{n}^{1}=\infty $. If $Tp=p$
and $\widetilde{T}\widetilde{p}=\widetilde{p}$ such that $\widetilde{p}%
_{n}\rightarrow \widetilde{p}$ as $n\rightarrow \infty $, then we have%
\begin{equation}
\left\Vert p-\widetilde{p}\right\Vert \leq \frac{5\varepsilon }{1-\delta }%
\text{,}  \label{eqn45}
\end{equation}%
where $\varepsilon >0$ is a fixed number.
\end{theorem}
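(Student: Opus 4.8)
The plan is to reuse the three-step telescoping estimate from Theorems 1--3, now comparing the $T$-orbit $\{p_n\}$ with the $\widetilde T$-orbit $\{\widetilde p_n\}$, and then to feed the resulting recursion into Lemma 2. The single recurring device is the mixed estimate
\begin{equation*}
\|Tx-\widetilde T y\|\le\|Tx-Ty\|+\|Ty-\widetilde T y\|\le\delta\|x-y\|+\varepsilon,
\end{equation*}
obtained from the triangle inequality, the contraction condition (1.2), and the approximate-operator condition (1.9); whenever a difference of the form $Ta-\widetilde T b$ appears I would apply it to peel off a factor $\delta$ in front of the norm together with one additive $\varepsilon$.

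First I would estimate $\|r_n-\widetilde r_n\|$. Since $r_n$ and $\widetilde r_n$ are convex combinations with identical weights, the device yields
\begin{equation*}
\|r_n-\widetilde r_n\|\le\bigl[1-\alpha_n^3(1-\delta)\bigr]\|p_n-\widetilde p_n\|+\alpha_n^3\varepsilon .
\end{equation*}
Substituting this into the corresponding estimate for $\|q_n-\widetilde q_n\|$, and that in turn into $\|p_{n+1}-\widetilde p_{n+1}\|$, and using $\alpha_n^i\in[0,1]$ and $\delta\in(0,1)$ to bound each factor $1-\alpha_n^i(1-\delta)$ by $1$, I expect to arrive at a recursion of the form
\begin{equation*}
\|p_{n+1}-\widetilde p_{n+1}\|\le\bigl[1-\alpha_n^1(1-\delta)\bigr]\|p_n-\widetilde p_n\|+C_n\varepsilon,
\end{equation*}
where $C_n=1+\alpha_n^1\delta+\alpha_n^1\alpha_n^2\alpha_n^3\delta^2$ is bounded by an absolute constant.

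To invoke Lemma 2 I would put $a_n=\|p_n-\widetilde p_n\|$, $\mu_n=\alpha_n^1(1-\delta)\in(0,1)$, and $\eta_n=C_n\varepsilon/\bigl[\alpha_n^1(1-\delta)\bigr]$, so that the recursion reads $a_{n+1}\le(1-\mu_n)a_n+\mu_n\eta_n$. Hypothesis (ii) gives $\sum\mu_n=(1-\delta)\sum\alpha_n^1=\infty$, and hypothesis (i) in the form $1/\alpha_n^1\le2$ gives $C_n/\alpha_n^1=1/\alpha_n^1+\delta+\alpha_n^2\alpha_n^3\delta^2\le5$, whence $\eta_n\le5\varepsilon/(1-\delta)$. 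Lemma 2 then yields $\limsup_n\|p_n-\widetilde p_n\|\le5\varepsilon/(1-\delta)$. Finally, Theorem 1 applies (condition (ii) forces $\sum\alpha_k^1=\infty$) and gives $p_n\to x_\ast=p$, while $\widetilde p_n\to\widetilde p$ by hypothesis; continuity of the norm turns $\|p_n-\widetilde p_n\|$ into $\|p-\widetilde p\|$ in the limit, so the $\limsup$ bound becomes $\|p-\widetilde p\|\le5\varepsilon/(1-\delta)$, as claimed.

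The only real obstacle is bookkeeping: carrying the three nested convex combinations through the mixed estimate while keeping the clean coefficient $1-\alpha_n^1(1-\delta)$, and ensuring the accumulated $\varepsilon$-terms stay bounded after division by $\alpha_n^1(1-\delta)$. This last division is precisely where the lower bound $\tfrac12\le\alpha_n^1$ from hypothesis (i) is indispensable; it is the sole role of that condition in the argument.
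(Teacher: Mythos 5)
Your proposal is correct and follows essentially the same route as the paper's proof: the same mixed estimate $\|Tx-\widetilde{T}y\|\leq \delta \|x-y\|+\varepsilon$ threaded through the three steps $r_n$, $q_n$, $p_{n+1}$, the same recursion with coefficient $1-\alpha_n^1(1-\delta)$, the same application of Lemma 2, and the same passage to the limit using Theorem 1. The only (harmless) difference is bookkeeping: the paper bounds each of the five $\varepsilon$-terms separately by $\alpha_n^1\varepsilon$ using $1-\alpha_n^1\leq\alpha_n^1$, whereas you sum them into $C_n\varepsilon$ and then use $1/\alpha_n^1\leq 2$, which in fact yields the slightly sharper constant $4$ before you relax it to $5$.
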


\begin{proof}
It follows from (1.2), (1.7), and (2.32) that%
\begin{eqnarray}
\left\Vert r_{n}-\widetilde{r}_{n}\right\Vert &=&\left\Vert \left( 1-\alpha
_{n}^{3}\right) p_{n}+\alpha _{n}^{3}Tp_{n}-\left( 1-\alpha _{n}^{3}\right) 
\widetilde{p}_{n}-\alpha _{n}^{3}\widetilde{T}\widetilde{p}_{n}\right\Vert 
\notag \\
&\leq &\left( 1-\alpha _{n}^{3}\right) \left\Vert p_{n}-\widetilde{p}%
_{n}\right\Vert +\alpha _{n}^{3}\left\Vert Tp_{n}-\widetilde{T}\widetilde{p}%
_{n}\right\Vert  \notag \\
&\leq &\left( 1-\alpha _{n}^{3}\right) \left\Vert p_{n}-\widetilde{p}%
_{n}\right\Vert +\alpha _{n}^{3}\left\{ \left\Vert Tp_{n}-T\widetilde{p}%
_{n}\right\Vert +\left\Vert T\widetilde{p}_{n}-\widetilde{T}\widetilde{p}%
_{n}\right\Vert \right\}  \notag \\
&\leq &\left[ 1-\alpha _{n}^{3}\left( 1-\delta \right) \right] \left\Vert
p_{n}-\widetilde{p}_{n}\right\Vert +\alpha _{n}^{3}\varepsilon \text{,}
\label{eqn46}
\end{eqnarray}%
\begin{eqnarray}
\left\Vert q_{n}-\widetilde{q}_{n}\right\Vert &=&\left\Vert \left( 1-\alpha
_{n}^{2}\right) Tp_{n}+\alpha _{n}^{2}Tr_{n}-\left( 1-\alpha _{n}^{2}\right) 
\widetilde{T}\widetilde{p}_{n}-\alpha _{n}^{2}\widetilde{T}\widetilde{r}%
_{n}\right\Vert  \notag \\
&\leq &\left( 1-\alpha _{n}^{2}\right) \left\Vert Tp_{n}-\widetilde{T}%
\widetilde{p}_{n}\right\Vert +\alpha _{n}^{2}\left\Vert Tr_{n}-\widetilde{T}%
\widetilde{r}_{n}\right\Vert  \notag \\
&\leq &\left( 1-\alpha _{n}^{2}\right) \left\{ \left\Vert Tp_{n}-T\widetilde{%
p}_{n}\right\Vert +\left\Vert T\widetilde{p}_{n}-\widetilde{T}\widetilde{p}%
_{n}\right\Vert \right\}  \notag \\
&&+\alpha _{n}^{2}\left\{ \left\Vert Tr_{n}-T\widetilde{r}_{n}\right\Vert
+\left\Vert T\widetilde{r}_{n}-\widetilde{T}\widetilde{r}_{n}\right\Vert
\right\}  \notag \\
&\leq &\left( 1-\alpha _{n}^{2}\right) \delta \left\Vert p_{n}-\widetilde{p}%
_{n}\right\Vert +\alpha _{n}^{2}\delta \left\Vert r_{n}-\widetilde{r}%
_{n}\right\Vert +\left( 1-\alpha _{n}^{2}\right) \varepsilon +\alpha
_{n}^{2}\varepsilon \text{,}  \label{eqn47}
\end{eqnarray}%
\begin{eqnarray}
\left\Vert p_{n+1}-\widetilde{p}_{n+1}\right\Vert &=&\left\Vert \left(
1-\alpha _{n}^{1}\right) Tp_{n}+\alpha _{n}^{1}Tq_{n}-\left( 1-\alpha
_{n}^{1}\right) \widetilde{T}\widetilde{p}_{n}-\alpha _{n}^{1}\widetilde{T}%
q_{n}\right\Vert  \notag \\
&\leq &\left( 1-\alpha _{n}^{1}\right) \left\Vert Tp_{n}-\widetilde{T}%
\widetilde{p}_{n}\right\Vert +\alpha _{n}^{1}\left\Vert Tq_{n}-\widetilde{T}%
\widetilde{q}_{n}\right\Vert  \notag \\
&\leq &\left( 1-\alpha _{n}^{1}\right) \left\{ \left\Vert Tp_{n}-T\widetilde{%
p}_{n}\right\Vert +\left\Vert T\widetilde{p}_{n}-\widetilde{T}\widetilde{p}%
_{n}\right\Vert \right\}  \notag \\
&&+\alpha _{n}^{1}\left\{ \left\Vert Tq_{n}-T\widetilde{q}_{n}\right\Vert
+\left\Vert T\widetilde{q}_{n}-\widetilde{T}\widetilde{q}_{n}\right\Vert
\right\}  \notag \\
&\leq &\left( 1-\alpha _{n}^{1}\right) \left\{ \delta \left\Vert p_{n}-%
\widetilde{p}_{n}\right\Vert +\varepsilon \right\} +\alpha _{n}^{1}\left\{
\delta \left\Vert q_{n}-\widetilde{q}_{n}\right\Vert +\varepsilon \right\} 
\notag \\
&=&\left( 1-\alpha _{n}^{1}\right) \delta \left\Vert p_{n}-\widetilde{p}%
_{n}\right\Vert +\alpha _{n}^{1}\delta \left\Vert q_{n}-\widetilde{q}%
_{n}\right\Vert +\left( 1-\alpha _{n}^{1}\right) \varepsilon +\alpha
_{n}^{1}\varepsilon \text{.}  \label{eqn48}
\end{eqnarray}%
Combining (2.34), (2.35), and (2.36)%
\begin{eqnarray}
\left\Vert p_{n+1}-\widetilde{p}_{n+1}\right\Vert &\leq &\left\{ \left(
1-\alpha _{n}^{1}\right) \delta +\alpha _{n}^{1}\delta \left\{ \left(
1-\alpha _{n}^{2}\right) \delta +\alpha _{n}^{2}\delta \left[ 1-\alpha
_{n}^{3}\left( 1-\delta \right) \right] \right\} \right\} \left\Vert p_{n}-%
\widetilde{p}_{n}\right\Vert  \notag \\
&&+\alpha _{n}^{1}\delta \alpha _{n}^{2}\delta \alpha _{n}^{3}\varepsilon
+\alpha _{n}^{1}\delta \left( 1-\alpha _{n}^{2}\right) \varepsilon +\alpha
_{n}^{1}\delta \alpha _{n}^{2}\varepsilon +\left( 1-\alpha _{n}^{1}\right)
\varepsilon +\alpha _{n}^{1}\varepsilon  \label{eqn49}
\end{eqnarray}%
Since $\delta \in \left( 0,1\right) $, $\alpha _{n}^{i}\in \left[ 0,1\right] 
$ for each $i\in \left\{ 1,2,3\right\} $ and for all $n\in 
\mathbb{N}
$,%
\begin{equation}
1-\alpha _{n}^{3}\left( 1-\delta \right) <1\text{,}  \label{eqn50}
\end{equation}%
\begin{equation}
1-\alpha _{n}^{2}\left( 1-\delta \right) <1\text{,}  \label{eqn51}
\end{equation}%
\begin{equation}
\alpha _{n}^{2}\alpha _{n}^{3}\delta ^{2}<1\text{,}  \label{eqn52}
\end{equation}%
\begin{equation}
\left( 1-\alpha _{n}^{2}\right) \delta <1\text{,}  \label{eqn53}
\end{equation}%
\begin{equation}
\alpha _{n}^{2}\delta <1\text{,}  \label{eqn54}
\end{equation}%
and by assumption (i) we have%
\begin{equation}
1-\alpha _{n}^{1}\leq \alpha _{n}^{1}\text{.}  \label{eqn55}
\end{equation}%
Thus, an application of inequalities (2.38), (2.39), (2.40), (2.41), (2.42)
and (2.43) to (2.37) yields%
\begin{equation}
\left\Vert p_{n+1}-\widetilde{p}_{n+1}\right\Vert \leq \left[ 1-\alpha
_{n}^{1}\left( 1-\delta \right) \right] \left\Vert p_{n}-\widetilde{p}%
_{n}\right\Vert +\alpha _{n}^{1}\left( 1-\delta \right) \frac{5\varepsilon }{%
1-\delta }\text{.}  \label{eqn56}
\end{equation}%
Let us denote%
\begin{equation}
a_{n}:=\left\Vert p_{n}-\widetilde{p}_{n}\right\Vert \text{, }\mu
_{n}:=\alpha _{n}^{1}\left( 1-\delta \right) \in \left( 0,1\right) \text{.}
\label{eqn57}
\end{equation}
It follows from Lemma 2 that%
\begin{equation}
0\leq \underset{n\rightarrow \infty }{\lim \sup }\left\Vert p_{n}-\widetilde{%
p}_{n}\right\Vert \leq \underset{n\rightarrow \infty }{\lim \sup }\frac{%
5\varepsilon }{1-\delta }\text{.}  \label{eqn58}
\end{equation}%
From Theorem 1 we know that $\lim_{n\rightarrow \infty }p_{n}=x_{\ast }$.
Thus, using this fact together with the assumption $\lim_{n\rightarrow
\infty }\widetilde{p}_{n}=u_{\ast }$ we obtain%
\begin{equation}
\left\Vert x_{\ast }-u_{\ast }\right\Vert \leq \frac{5\varepsilon }{1-\delta 
}\text{.}  \label{eqn59}
\end{equation}
\end{proof}

\end{document}